\newtheorem{Theorem}{Theorem}[section]
\newtheorem{Lemma}[Theorem]{Lemma}
\newtheorem{Corollary}[Theorem]{Corollary}
\theoremstyle{definition}
\newtheorem{Definition}[Theorem]{Definition}
\newtheorem{Example}[Theorem]{Example}
\newtheorem{Remark}[Theorem]{Remark}
\numberwithin{equation}{section}
\newcommand{\norm}[1]{\lVert #1\rVert}
\newcommand{\db}{\overline\partial}
\newcommand{\wi}{\widetilde}
\DeclareMathOperator{\ric}{Ric}
\DeclareMathOperator{\codim}{codim}
\DeclareMathOperator{\supp}{supp}
\newcommand{\cali}[1]{\mathscr{#1}}
\newcommand{\cO}{\cali{O}} 
\newcommand{\cM}{\cali{M}}\newcommand{\cT}{\cali{T}}
\newcommand{\cC}{\cali{C}}
\newcommand{\field}[1]{\mathbb{#1}}
\newcommand{\R}{\field{R}}
\newcommand{\C}{\field{C}}
\newcommand{\N}{\field{N}}
\newcommand{\E}{\mathbb{E}}
\newcommand{\mO}{\mathcal{O}}
\newcommand{\Cdp}{\C^{d_p}}
\newcommand{\hp}{H^0_{(2)}(X,L_p)}
\newcommand{\FS}{{{_\mathrm{FS}}}}
\newcommand{\comment}[1]{}
\begin{document}

\title{Universality results for zeros of random holomorphic sections}

\author{Turgay Bayraktar} 
\thanks{T.\ Bayraktar is partially supported by T\"{U}B\.{I}TAK 
grants B\.{I}DEB 2232/118C006, ARDEB 1001/118F049 
and Science Academy, Turkey BAGEP grant.}
\address{Faculty of Engineering and Natural Sciences, 
Sabanc{\i} University, \.{I}stanbul, Turkey}
\email{tbayraktar@sabanciuniv.edu}

\author{Dan Coman}
\thanks{D.\ Coman is partially supported by the NSF Grant DMS-1700011}
\address{Department of Mathematics, 
Syracuse University, Syracuse, NY 13244-1150, USA}
\email{dcoman@syr.edu}

\author{George Marinescu}
\address{Univerisit\"at zu K\"oln, Mathematisches institut,
Weyertal 86-90, 50931 K\"oln, Germany 
\newline\mbox{\quad}\,Institute of Mathematics `Simion Stoilow', 
Romanian Academy, Bucharest, Romania}
\email{gmarines@math.uni-koeln.de}
\thanks{G.\ Marinescu is partially supported by DFG funded project CRC/TRR 191
and gratefully acknowledges the support of Syracuse University,
where part of this paper was written.}
\thanks{The authors were partially funded through the Institutional Strategy 
of the University of Cologne within the German Excellence Initiative (KPA QM2)}

\subjclass[2010]{Primary 32A60, 60D05; 
Secondary 32L10, 32C20, 32U40, 81Q50.}
\keywords{Bergman kernel, Fubini-Study current, 
singular Hermitian metric, compact normal K\"ahler complex space, 
zeros of random holomorphic sections}

\date{April 12, 2020}

\begin{abstract}
In this work we prove an universality result regarding the 
equidistribution of zeros of random holomorphic sections 
associated to a sequence of singular Hermitian holomorphic 
line bundles on a compact K\"ahler complex space $X$. 
Namely, under mild moment assumptions, we show that 
the asymptotic distribution of zeros of random holomorphic sections 
is independent of the choice of the probability measure 
on the space of holomorphic sections. 
In the case when $X$ is a compact K\"ahler manifold, 
we also prove an off-diagonal exponential decay estimate for 
the Bergman kernels of a sequence of positive line bundles on $X$.

\end{abstract}

\maketitle

\tableofcontents

\section{Introduction}
\par In this paper we study the asymptotic distribution of zeros of random sequences of 
holomorphic sections of singular Hermitian holomorphic line bundles. We generalize our previous 
results  from \cite{CM11,CM13,CM13b, CMM, B6,B7,B9}
in several directions. 
We consider sequences $(L_p,h_p)$, $p\geq1$, of singular Hermitian
holomorphic line bundles over K\"ahler spaces
instead of the sequence of powers 
$(L^p,h^p)=(L^{\otimes p},h^{\otimes p})$ 
of a fixed line bundle $(L,h)$. Moreover, we endow the vector space 
of holomorphic sections with wide 
classes of probability measures (see condition (B) below and Section \ref{SS:exB}).

\par Recall that by the results of \cite{Ti90} 
(see also \cite[Section\,5.3]{MM07}), if $(X,\omega)$ is 
a compact K\"ahler manifold and $(L,h)$ is a line bundle
such that the Chern curvature form $c_1(L,h)$ equals $\omega$, 
then the normalized Fubini-Study currents $\frac1p\gamma_p$ associated to 
$H^0(X,L^p)$ (see \eqref{e:BFS1}) are smooth for $p$ sufficiently large
and converge in the $\cC^2$ topology to $\omega$. 
This result can be applied to describe the asymptotic distribution 
of the zeros of sequences of Gaussian holomorphic sections. 
Indeed, it is shown in \cite{ShZ99} 
(see also \cite{NoVo:98,DS06,ShZ08,Sh08,DMS}) 
that for almost all sequences 
$\{s_p\in H^0(X,L^p)\}_{p\geq1}$ the normalized 
zero-currents $\frac1p[s_p=0]$ converge weakly to 
$\omega$ on $X$.   
Thus $\omega$ can be approximated 
by various algebraic or analytic objects in the semiclassical limit 
$p\to\infty$\,. Some important technical tools in higher 
dimensions were introduced in \cite{FS95}. 
Using these tools we generalized in 
\cite{CM11,CM13,CM13b,CMM,CMN15,CMN17,DMM} the above results 
to the case of singular positively curved Hermitian metrics $h$.
We note that statistics of zeros of sections and hypersurfaces
have been studied also in the context of real manifolds and
real vector bundles, see e.g.\ \cite{GW14,NS14}.

In this paper we work in the following setting:

\smallskip

(A1) $(X,\omega)$ is a compact (reduced) normal K\"ahler space 
of pure dimension $n$, $X_{\rm reg}$ denotes the set of 
regular points of $X$, and $X_{\rm sing}$ denotes the set 
of singular points of $X$.

\smallskip

(A2) $(L_p,h_p)$, $p\geq1$, is a sequence of holomorphic line 
bundles on $X$ with singular Hermitian metrics $h_p$ whose 
curvature currents verify 
\begin{equation}\label{e:pc}
c_1(L_p,h_p)\geq a_p\,\omega \, \text{ on $X$, where 
$a_p>0$ and } \lim_{p\to\infty}a_p=\infty.
\end{equation}
Let $A_p:=\int_Xc_1(L_p,h_p)\wedge\omega^{n-1}$. 
If $X_{\rm sing}\neq\emptyset$ we also assume that 
\begin{equation}\label{e:domin0}
\exists\,T_0\in\cT(X) \text{ such that } 
c_1(L_p,h_p)\leq A_pT_0\,,\;\forall\,p\geq1\,.
\end{equation}

\par Here $\cT(X)$ denotes the space of positive 
closed currents of bidegree $(1,1)$ on $X$ 
with local plurisubharmonic potentials (see Section \ref{SS:psh}). 
We let $H^0_{(2)}(X,L_p)$ be the Bergman space 
of $L^2$-holomorphic sections of $L_p$ relative to the metric 
$h_p$ and the volume form $\omega^n/n!$ on $X$, 
\begin{equation}\label{e:bs}
H^0_{(2)}(X,L_p)=\left\{S\in H^0(X,L_p):\,
\|S\|_p^2:=\int_{X_{\rm reg}}|S|^2_{h_p}\,\frac{\omega^n}{n!}
<\infty\right\}\,,
\end{equation}
endowed with the obvious inner product.  
For $p\geq1$, let $d_p=\dim H^0_{(2)}(X,L_p)$ and 
let $S_1^p,\dots,S_{d_p}^p$ be an orthonormal 
basis of $H^0_{(2)}(X,L_p)$.

\par Now, we describe the randomization on $\hp$. 
Using the above orthonormal bases
we identify the spaces $\hp\simeq \C^{d_p}$ and endow them 
with probability measures  $\sigma_p$ verifying the following 
moment condition: 

\smallskip

(B) There exist a constant $\nu\geq1$ and for every $p\geq1$ constants 
$C_p>0$ such that  
\[\int_{\C^{d_p}}\big|\log|\langle a,u\rangle|\,\big|^\nu\,d\sigma_p(a)
\leq C_p\,,\,\text{for any $u\in\C^{d_p}$ with $\|u\|=1$}\,.\] 

\smallskip

We remark that the probability space $(\hp,\sigma_p)$ depends in general on the choice of 
the orthonormal basis (used for the identification $\hp\simeq \C^{d_p}$). 
However, it follows from Theorem \ref{th1} below that the global distribution 
of zeros of random holomorphic sections does not depend on the choice 
of the orthonormal basis. 

General classes of measures $\sigma_p$ that satisfy condition (B) are given in 
Section \ref{SS:exB}. Important examples are provided by the Gaussians 
(see Section \ref{s:Gauss}) and the Fubini-Study volumes (see Section \ref{s:FSvol}), 
{\em which verify (B) for every $\nu\geq1$ with a constant $C_p=\Gamma_\nu$ independent of $p$}. 
For such measures Theorem \ref{th1} below takes a particularly nice form. 
We note that for the measures $\sigma_p$ from Sections \ref{s:Gauss}, \ref{s:FSvol} and \ref{s:sphere} (area measure of spheres), 
the probability space $(\hp,\sigma_p)$ does not depend on the choice of the orthonormal basis, since these measures are unitary invariant.
In Section \ref{s:tail} we show that measures with \textit{heavy tail probability} 
(see condition (B1) therein) and \textit{small ball probability} (see condition (B2) therein) 
verify assumption (B). 
We also stress that random holomorphic sections with i.i.d. coefficients whose 
distribution has bounded density and logarithmically decaying tails arise as a 
special case (cf.\ Lemma \ref{iid}). Moreover, locally moderate measures with 
compact support are also among the examples of such measures (cf. Lemma \ref{moderate}).

Given a section $s\in H^0(X,L_p)$ we denote by $[s=0]$
the current of integration over the zero divisor of $s$.
The expectation current $\E[s_p=0]$ of the current-valued random variable 
$H^0_{(2)}(X,L_p)\ni s_p\mapsto[s_p=0]$ is defined by
$$\big\langle \E[s_p=0],\Phi\big\rangle=
\int\limits_{H^0_{(2)}(X,L_p)}\big\langle[s_p=0],\Phi\big\rangle\,d\sigma_p(s_p),$$
where $\Phi$ is a $(n-1,n-1)$ test form on $X$. 
We consider the product probability space
\begin{equation}\label{e:calH}
(\mathcal{H},\sigma)=
\left(\prod_{p=1}^\infty H^0_{(2)}(X,L_p),\prod_{p=1}^\infty\sigma_p\right). 
\end{equation}
The following result gives the distribution
of the  zeros of random
sequences of holomorphic sections of $L_p$, as well as the convergence
in $L^1$ of 
the logarithms of their pointwise norms. Note that by the Lelong-Poincar\'e formula
(see \eqref{e:LP}) the latter are the potentials of the currents
of integration on the zero sets, thus their convergence in 
$L^1$ implies the weak convergence of the zero-currents. 
\begin{Theorem}\label{th1}
Assume that $(X,\omega)$, $(L_p,h_p)$ and $\sigma_p$ 
verify the assumptions (A1), (A2) and (B). 
Then the following hold:

\smallskip
\noindent
(i) If $\displaystyle\lim_{p\to\infty}C_pA_p^{-\nu}=0$ 
then $\displaystyle\frac{1}{A_p}\big(\E[s_p=0]-c_1(L_p,h_p)\big)\to 0$\,, 
as $p\to \infty$, in the weak sense of currents on $X$. 

\smallskip
\noindent
(ii) If $\displaystyle\liminf_{p\to\infty}C_pA_p^{-\nu}=0$ 
then there exists a sequence of natural numbers $p_j\nearrow\infty$ 
such that for $\sigma$-a.\,e.\ sequence $\{s_p\}\in\mathcal{H}$ we have
\[
\frac{1}{A_{p_j}}\log|s_{p_j}|_{h_{p_j}}\to0\,,\,\;
\frac{1}{A_{p_j}}\big([s_{p_j}=0]-c_1(L_{p_j},h_{p_j})\big) \to0\,,\,
\text{ as $j\to\infty$,}
\]
in $L^1(X,\omega^n)$, respectively in the weak sense of currents on $X$.

\smallskip
\noindent
(iii) If $\displaystyle\sum_{p=1}^{\infty}C_pA_p^{-\nu}<\infty$ 
then for $\sigma$-a.\,e.\ sequence
$\{s_p\}\in\mathcal{H}$ we have 
\[
\frac{1}{A_p}\log|s_p|_{h_p}\to0\,,\,\;
\frac{1}{A_p}\big([s_p=0]-c_1(L_p,h_p)\big) \to0\,,\,\text{ as $p\to\infty$,}
\]
in $L^1(X,\omega^n)$, respectively in the weak sense of currents on $X$.
\end{Theorem}
\begin{Remark}\label{R:indp}
If the measures $\sigma_p$ verify condition (B) with constants 
$C_p=\Gamma_\nu$ independent of $p$ then the hypothesis of $(i)$ 
(and hence of $(ii)$), $\lim_{p\to\infty}\Gamma_\nu A_p^{-\nu}=0$, 
is automatically verified since by \eqref{e:pc},
\[A_p\geq a_p\int_X\omega^n\,,\,\text{ so $A_p\to\infty$ as $p\to\infty$.}\]
Moreover, the hypothesis of $(iii)$ takes the simpler form $\sum_{p=1}^{\infty}A_p^{-\nu}<\infty$.
\end{Remark}
An important ingredient in the proof of Theorem \ref{th1} is the asymptotic behavior of the 
Bergman kernel functions $P_p$ of the spaces $H^0_{(2)}(X,L_p)$ (see \eqref{e:BFS1} for the 
definition) established in \cite[Theorem 1.1]{CMM}: namely, one has that 
\[\frac{1}{A_p}\,\log P_p\to0\, \text{ as $p\to\infty$ in $L^1(X,\omega^n)$.}\] 
Theorem \ref{th1} will follow from this using Theorem \ref{th1gen}, which shows, 
under very general assumptions, that the equidistribution of zeros of random holomorphic 
sections is a consequence of the asymptotic behavior of the Bergman kernel (see \eqref{e:Bka}). 
A similar approach was used in a different context in \cite[Theorems 1.1 and 1.2]{CM11}.
\par If $(L_p,h_p)=(L^p,h^p)$, where $(L,h)$ is a fixed singular 
Hermitian holomorphic line bundle, Theorem \ref{th1} gives analogues of the 
equidistribution results from \cite{ShZ99,CM11,CM13,CM13b, CMM} 
for Gaussian ensembles and \cite{DS06,B6,B7,BL13} for non-Gaussian ensembles 
on compact normal K\"ahler spaces. Note that in this case
hypothesis \eqref{e:domin0} is automatically verified as 
$c_1(L^p,h^p)=p\,c_1(L,h)$, so we can take 
$T_0=c_1(L,h)/\|c_1(L,h)\|$, 
where $\|c_1(L,h)\|:=\int_Xc_1(L,h)\wedge\omega^{n-1}$.
We formulate here a corollary in this situation, 
for further variations of Theorem \ref{th1} see Section \ref{S:equidist}.

\begin{Corollary}\label{C:Lp1}
Let $(X,\omega)$ be a compact normal K\"ahler space and $(L,h)$ 
be a singular Hermitian holomorphic line bundle on $X$ such that 
$c_1(L,h)\geq\varepsilon\omega$ for some $\varepsilon>0$. 
For $p\geq1$ let $\sigma_p$ be probability measures on $H^0_{(2)}(X,L^p)$
satisfying condition (B).
Then the following hold:

\smallskip
\noindent
(i) If $\displaystyle\lim_{p\to\infty}C_p\,p^{-\nu}=0$ 
then $\displaystyle\frac{1}{p}\,\E[s_p=0]\to c_1(L,h)$\,, 
as $p\to \infty$, weakly on $X$. 

\smallskip
\noindent
(ii) If $\displaystyle\liminf_{p\to\infty}C_p\,p^{-\nu}=0$ 
then there exists a sequence of natural numbers $p_j\nearrow\infty$ 
such that for $\sigma$-a.\,e.\ sequence $\{s_p\}\in\mathcal{H}$ we have
as $j\to\infty$,
\[\frac{1}{p_j}\,\log|s_{p_j}|_{h^{p_j}}\to0
\,\;\text{in $L^1(X,\omega^n)$} \,,\:\:
\frac{1}{p_j}\,[s_{p_j}=0]\to c_1(L,h)\,,\,
\text{weakly on $X$}.\]

\smallskip
\noindent
(iii) If $\displaystyle\sum_{p=1}^{\infty}C_p\,p^{-\nu}<\infty$ 
then for $\sigma$-a.\,e.\ sequence
$\{s_p\}\in\mathcal{H}$ we have as $p\to\infty$,
\[\frac{1}{p}\,\log|s_p|_{h^p}\to0
\,\;\text{in $L^1(X,\omega^n)$} \,,\:\:
\frac{1}{p}\,[s_p=0] \to c_1(L,h)\,,\,
\text{weakly on $X$}.\]
\end{Corollary}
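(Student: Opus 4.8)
The plan is to derive Corollary \ref{C:Lp1} as a direct specialization of Theorem \ref{th1} applied to the constant sequence $(L_p,h_p)=(L^p,h^p)$, so the main task is to verify the hypotheses (A1), (A2), (B) in this setting and to translate the normalized conclusions of the theorem back into the unnormalized statements of the corollary. First I would observe that (A1) is assumed outright. For (A2), I would note that $c_1(L^p,h^p)=p\,c_1(L,h)\geq p\varepsilon\,\omega$, so \eqref{e:pc} holds with $a_p=p\varepsilon\to\infty$. As remarked in the excerpt, the domination condition \eqref{e:domin0} is automatic here: since $c_1(L^p,h^p)=p\,c_1(L,h)$ and $A_p=\int_X c_1(L^p,h^p)\wedge\omega^{n-1}=p\,\|c_1(L,h)\|$, one checks that $c_1(L^p,h^p)=A_p T_0$ with $T_0=c_1(L,h)/\|c_1(L,h)\|\in\cT(X)$, giving equality (hence the required inequality). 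Condition (B) is assumed directly on the $\sigma_p$.

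Next I would identify the growth of $A_p$. Since $A_p=p\,\|c_1(L,h)\|$ with $\|c_1(L,h)\|>0$ a fixed positive constant, the three dichotomy hypotheses of Theorem \ref{th1} become, up to the constant factor $\|c_1(L,h)\|^{-\nu}$, exactly the conditions $\lim C_p p^{-\nu}=0$, $\liminf C_p p^{-\nu}=0$, and $\sum C_p p^{-\nu}<\infty$ stated in parts (i), (ii), (iii) of the corollary. So each hypothesis of the corollary is equivalent to the corresponding hypothesis of the theorem. I would then invoke Theorem \ref{th1} to obtain the conclusions in normalized form, namely that $\frac{1}{A_p}\big(\E[s_p=0]-c_1(L^p,h^p)\big)\to0$ in part (i), and the analogous a.e.\ statements \eqref{e:ss}, \eqref{e:cp} in parts (ii), (iii).

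The final step is the translation. Dividing by $A_p=p\,\|c_1(L,h)\|$ and multiplying through by the fixed constant $\|c_1(L,h)\|$, the normalization $\frac{1}{A_p}$ can be replaced by $\frac{1}{p}$ without affecting convergence to a limit, after accounting for the constant. Concretely, since $\frac{1}{A_p}c_1(L^p,h^p)=\frac{p}{A_p}c_1(L,h)=\frac{1}{\|c_1(L,h)\|}c_1(L,h)=T_0$, I would rescale: $\frac{1}{p}\,\E[s_p=0]=\frac{\|c_1(L,h)\|}{A_p}\,\E[s_p=0]$, and Theorem \ref{th1}(i) gives $\frac{1}{A_p}\E[s_p=0]\to T_0$, whence $\frac{1}{p}\,\E[s_p=0]\to\|c_1(L,h)\|\,T_0=c_1(L,h)$, which is precisely the asserted limit. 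The same rescaling turns \eqref{e:ss} into the statement $\frac{1}{p_j}\log|s_{p_j}|_{h_{p_j}}\to0$ and $\frac{1}{p_j}[s_{p_j}=0]\to c_1(L,h)$, and likewise \eqref{e:cp} into the part (iii) conclusion, using $\frac{1}{p}c_1(L^p,h^p)=c_1(L,h)$ exactly.

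I do not expect any genuine obstacle here, since the corollary is a formal consequence of the theorem; the only point requiring a little care is the bookkeeping of the constant $\|c_1(L,h)\|$ relating the normalizations $\frac{1}{A_p}$ and $\frac{1}{p}$, and checking that this positive constant (finite and nonzero because $c_1(L,h)\geq\varepsilon\omega>0$) does not disturb any of the limit, $\liminf$, or summability conditions. One should also confirm that the $L^1$-convergence of the normalized logarithms is preserved under multiplication by the fixed positive constant $\|c_1(L,h)\|$, which is immediate.
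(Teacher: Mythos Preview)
Your proposal is correct and is essentially the same approach the paper takes: the paper presents Corollary~\ref{C:Lp1} as an immediate specialization of Theorem~\ref{th1} to $(L_p,h_p)=(L^p,h^p)$, noting just before the statement that \eqref{e:domin0} holds automatically with $T_0=c_1(L,h)/\|c_1(L,h)\|$, and implicitly using $A_p=p\,\|c_1(L,h)\|$ to pass between the normalizations $1/A_p$ and $1/p$.
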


It is by now well established that the off-diagonal decay of the
Bergman/Szeg\H{o} kernel for powers $L^p$ of a line bundle $L$ implies the 
asymptotics of the variance current and variance number
for zeros of random holomorphic sections of $L^p$, 
cf.\ \cite{B9,STr,ShZ08}. Note also that the Bergman kernel
provides the 2-point correlation function for the 
determinantal random point process defined by the Bergman projection
\cite[\S 6.1]{Berman}.  

We wish to consider here the off-diagonal decay for Bergman kernels
of a sequence $L_p$ satisfying \eqref{e:pc}. We expect that this will have 
applications in obtaining a Central Limit Theorem for smooth linear statistics of zero divisors.
To state our result, let us introduce the relevant definitions.
We consider the situation where $X$ is smooth 
and the Hermitian metrics $h_p$ on $L_p$ are also smooth.
Let $L^2(X,L_p)$ be the space of $L^2$ integrable
sections of $L_p$ with respect to the metric $h_p$
and the volume form $\omega^n/n!$\,. We assume now that $h_p$ 
is smooth, hence $H^0_{(2)}(X,L_p)=H^0(X,L_p)$.
Let $P_p:L^2(X,L_p)\to H^0(X,L_p)$ be the orthogonal projection.
The Bergman kernel $P_p(x,y)$ is defined as the integral 
kernel of this projection, see \cite[Definition 1.4.2]{MM07}.
Let $d_p=\dim H^0(X,L_p)$ and $(S^p_j)_{j=1}^{d_p}$
be an orthonormal basis of $H^0(X,L_p)$.
We have
\[P_p(x,y)=\sum_{j=1}^{d_p} S^p_j(x)\otimes S^p_j(y)^*\in L_{p,x}\otimes L_{p,y}^*\,,\]
where $S^p_j(y)^*=\langle\,\cdot\,,S^p_j(y) \rangle_{h_p}\in L_{p,y}^*$.
We set $P_p(x):=P_p(x,x)$.

The next result provides the exponential off-diagonal
decay of the Bergman kernels $P_p(x,y)$ for 
sequences of positive line bundles $(L_p,h_p)$. 
Adapting methods from \cite{Lin01,Be03} we prove the following:  

\begin{Theorem}\label{T:Bergman}
Let $(X,\omega)$ be a compact K\"ahler manifold of dimension $n$
and $(L_p,h_p)$, $p\geq1$, be a sequence of holomorphic line bundles on $X$ 
with Hermitian metrics $h_p$ of class $\cC^3$ whose curvature forms verify \eqref{e:pc}.
Assume that
\begin{equation}\label{e:3d}
\varepsilon_p:=\|h_p\|_3^{1/3}a_p^{-1/2}\to0\;
\text{ as }p\to\infty\,.
\end{equation}
Then there exist constants $C,T>0$, $p_0\geq1$, such that
for every $x,y\in X$ and $p>p_0$ we have 
\begin{equation}\label{e:ed}
\big|P_p(x,y)\big|^2_{h_p}
\leq C\exp\!\big(\!-T\sqrt{a_p}\,d(x,y)\big)\frac{c_1(L_p,h_p)^n_x}{\omega^n_x}\,
{\frac{c_1(L_p,h_p)^n_y}{\omega^n_y}}\,\cdot
\end{equation}
\end{Theorem}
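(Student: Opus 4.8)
The plan is to establish the off-diagonal decay of the Bergman kernel via the Kodaira-Laplace operator and Agmon-type weighted $L^2$ estimates, following the approach of \cite{Lin01,Be03}. The essential point is that $P_p(x,y)$ is, up to identifying sections with functions in a suitable local frame, the reproducing kernel of the space of holomorphic sections; its off-diagonal decay reflects a spectral gap for the Kodaira Laplacian $\Box_p=\db_p^*\db_p$ acting on $L^2(X,L_p)$. Under hypothesis \eqref{e:pc}, the Bochner--Kodaira--Nakano formula gives a lower bound of the form $\Box_p\geq c\,a_p$ on $(0,q)$-forms with $q\geq1$, which is the quantitative spectral gap that makes $a_p^{1/2}$ the natural exponential rate in \eqref{e:ed}.

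First I would set up the local model: fix a point and trivialize $L_p$ over a geodesic ball of radius comparable to $a_p^{-1/2}$, writing the metric as $e^{-\varphi_p}$ with $\varphi_p$ having Hessian controlled by the curvature. After rescaling coordinates by $\sqrt{a_p}$ the curvature becomes of order one and the ambient manifold looks, to leading order, like $\mathbb{C}^n$ with a model weight; the regularity assumption $h_p\in\cC^3$ together with \eqref{e:3d} (which says exactly that the $\cC^3$-fluctuation of the weight is negligible after rescaling, since $\varepsilon_p=\|h_p\|_3^{1/3}a_p^{-1/2}\to0$) guarantees that these rescaled metrics converge to the model and that the error terms in the Bochner--Kodaira formula are absorbed. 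This is where the third-derivative control enters: one needs $\cC^3$ bounds to keep the subprincipal terms of $\Box_p$ under control after the dilation, and $\varepsilon_p\to0$ ensures the perturbation does not destroy the spectral gap.

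The heart of the proof is an Agmon exponential weight estimate. I would introduce a Lipschitz weight $\psi(z)=T\sqrt{a_p}\,\rho(z)$, where $\rho$ approximates the Riemannian distance $d(\cdot,y)$ and $|\nabla\rho|\leq1$, and estimate $\|e^{\psi}\db_p u\|^2$ from below in terms of $\|e^{\psi}u\|^2$ using the twisted Bochner--Kodaira inequality. The gradient term produced by conjugating $\db_p$ by $e^{\psi}$ contributes $-|\nabla\psi|^2=-T^2a_p|\nabla\rho|^2\geq-T^2a_p$, while the curvature lower bound contributes $+c\,a_p$; choosing $T$ small enough that $T^2<c$ yields a net positive gap of order $a_p$ on $e^{\psi}$-weighted $(0,1)$-forms. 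Applying this to the solution of $\db_p v=\db_p(\chi P_p(\cdot,y))$ for a cutoff $\chi$ supported away from $y$, together with the interior elliptic estimates and the Bergman reproducing property, produces a bound $\int_X e^{2\psi}|P_p(x,y)|_{h_p}^2\,\frac{\omega^n}{n!}\leq C\,P_p(y)$, and symmetrically in $x$; combined with the on-diagonal upper bound $P_p(x)\leq C\,\frac{c_1(L_p,h_p)^n_x}{\omega^n_x}$ (itself obtained from the local sup-norm estimate via the same rescaling) this gives \eqref{e:ed} after undoing the weight.

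The main obstacle I expect is the interplay between the exponential weight and the $\cC^3$ error terms at the scale $a_p^{-1/2}$: the Agmon estimate requires the spectral gap to survive the conjugation by $e^{\psi}$ uniformly in $p$, and one must verify that the curvature term $c\,a_p$ genuinely dominates both the weight gradient $T^2a_p$ \emph{and} the $\cC^3$-perturbation, which is precisely why \eqref{e:3d} is formulated with the exponent pattern $\|h_p\|_3^{1/3}a_p^{-1/2}$. Making the constants $C$ and $T$ uniform in $p$—rather than degenerating as $a_p\to\infty$—is the delicate bookkeeping, and I would handle it by carrying out all estimates in the rescaled coordinates where the relevant quantities are $O(1)$ and the smallness of $\varepsilon_p$ is explicit.
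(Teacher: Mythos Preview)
Your proposal is correct and follows essentially the same route as the paper: a Donnelly--Fefferman/Agmon weighted $L^2$ estimate for $\db$ with weight of the form $\varepsilon\sqrt{a_p}\,g_\delta(d(x,\cdot))$, applied to $u=\chi S-P_p(\chi S)$ for a cutoff $\chi$ vanishing near $x$, combined with a local sub-mean-value inequality at scale $r_p=a_p^{-1/2}$ and the on-diagonal bound $P_p(x)\leq C\,c_1(L_p,h_p)^n_x/\omega^n_x$ from \cite[Theorem~1.3]{CMM}. One point to sharpen: when you twist the metric to $h_pe^{-2\psi}$, admissibility of the weight requires not only the gradient bound $|\db\psi|\lesssim\sqrt{a_p}$ you cite but also a complex Hessian bound $dd^c\psi\geq-\tfrac{a_p}{2}\,\omega$, since the new curvature is $c_1(L_p,h_p)+dd^c\psi$; in the paper this is why the distance is smoothed at scale $\delta\gtrsim a_p^{-1/2}$, so that $|g_\delta''|\lesssim\delta^{-1}\lesssim\sqrt{a_p}$ and the Hessian term stays of order $a_p$. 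This is exactly the bookkeeping you flag at the end, and your plan would surface it as soon as you write the twisted Bochner--Kodaira identity.
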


Here $\|h_p\|_3$ denotes the sup-norm of the derivatives of $h_p$ of order at most 
three with respect to a reference cover of $X$ as defined in Section \ref{SS:refcov}, and 
$d(x,y)$ denotes the distance on $X$ induced by the K\"ahler metric 
$\omega$. We also recall that, in the hypotheses of Theorem \ref{T:Bergman}, 
the first order asymptotics of the Bergman kernel function $P_p(x)=P_p(x,x)$ 
was obtained in \cite[Theorem 1.3]{CMM} 
(see Theorem \ref{T:B1} below).

The situation when $(L_p,h_p)=(L^p,h^p)$ was intensively studied.
Let $(L_p,h_p)=(L^p,h^p)$, such that
there exists a constant $\varepsilon>0$ with
\begin{equation}\label{eq:0.6}
c_1(L,h)\geqslant\varepsilon\omega\,.
\end{equation}
Then $a_p=p\varepsilon$ and $\|h_p\|_3\lesssim p$ so \eqref{e:pc} and
\eqref{e:3d} are satisfied, thus \eqref{e:ed} holds in this case, and is a particular case of
\eqref{eq:0.7} below.
Namely, by \cite[Theorem 1]{MM15},
there exist $T >0$, $p_{0}>0$
so that for any $k\in \N$, there exists $C_k>0$ such that 
for any $p\geqslant p_{0}$, $x,y\in X$, we have 
\begin{equation}\label{eq:0.7}
\left| P_p(x,y)\right|_{\cC^k} \leqslant C_k \, p^{n+\frac{k}{2}}
\, \exp\!\left(- T\,\sqrt{p}\, d(x,y)\right).
\end{equation}
In \cite[Theorem 4.18]{DLM06}, \cite[Theorem 4.2.9]{MM07},
a refined version of \eqref{eq:0.7} was obtained, i.e., the asymptotic expansion
of $P_p(x,y)$ for $p\to +\infty$ 
with an exponential estimate of the remainder.
The estimate \eqref{eq:0.7} holds actually for complete K\"ahler manifolds with
bounded geometry and for the Bergman kernel of the bundle
$L^p\otimes E$, where $E$ is a fixed holomorphic Hermitian vector bundle.

Assume that $X=\C^n$ with the Euclidean metric,
$L=\C^{n+1}$ and $h=e^{-\varphi}$ where 
$\varphi:X\to\R$ is a smooth plurisubharmonic function  
such that \eqref{eq:0.6} holds.
Then the estimate \eqref{eq:0.7}  with $k=0$ was
obtained by  \cite{Christ91} for $n=1$ and 
\cite{Delin}, \cite{Lin01} for $n\geqslant 1$ 
(cf.\ also \cite{Be03}).
In \cite[Theorem 2.4]{B9} 
the exponential decay was obtained for a family of weights having 
super logarithmic growth at infinity.

Assume that $X$ is a compact K\"ahler manifold,
$c_1(L,h)=\omega$ and take $k=0$ and $d(x,y)>\delta>0$.
Then \eqref{eq:0.7} was obtained in \cite[Theorem 2.1]{LuZ13} (see also \cite{Berman})
and a sharper estimate than \eqref{eq:0.7}
is due to Christ \cite{Christ13}.

The paper is organized as follows. After introducing necessary
notions in Section \ref{S:prelim}, we prove Theorem \ref{T:Bergman} 
in Section \ref{S:berg}. In Section \ref{S:equidist}
we prove Theorem \ref{th1} and we provide
examples of measures satisfying condition (B) showing
how Theorem \ref{th1} transforms in these cases.


\section{Preliminaries}\label{S:prelim}
\subsection{Plurisubharmonic functions and currents on analytic 
spaces}\label{SS:psh} 
Let $X$ be a complex space. A chart $(U,\tau,V)$ on $X$ is a 
triple consisting of an open set $U\subset X$, a closed complex 
space $V\subset G\subset\C^N$ in an open set $G$ of $\C^N$ 
and a biholomorphic map $\tau:U\to V$ (in the category of complex 
spaces). The map $\tau:U\to G\subset\C^N$ is called a local 
embedding of the complex space $X$. We write
\[X=X_{\rm reg}\cup X_{\rm sing}\,,\]
where $X_{\rm reg}$ (resp.\ $X_{\rm sing}$) is the set of 
regular (resp.\ singular) points of $X$. 
Recall that a reduced complex space $(X,\cO)$ is called normal
if for every $x\in X$ the local ring $\cO_x$ is integrally closed in 
its quotient field $\cM_x$. Every normal complex space is 
locally irreducible and locally pure dimensional, 
cf.\ \cite[p.\,125]{GR84}, $X_{\rm sing}$ is a closed complex 
subspace of $X$ with $\codim X_{\rm sing}\geq2$. Moreover, 
Riemann's second extension theorem holds on normal complex 
spaces \cite[p.\,143]{GR84}. In particular, every holomorphic 
function on $X_{\rm reg}$ extends uniquely to a holomorphic 
function on $X$.   

Let $X$ be a complex space. A continuous (resp.\ smooth) function 
on $X$ is a function $\varphi:X\to\C$ such that for every 
$x\in X$ there exists a local embedding $\tau:U\to G\subset\C^N$ 
with $x\in U$ and a continuous (resp.\ smooth) function 
$\widetilde\varphi:G\to\C$ such that 
$\varphi|_U=\widetilde\varphi\circ\tau$. 
  
A \emph{(strictly) plurisubharmonic (psh)} function on $X$ 
is a function $\varphi:X\to[-\infty,\infty)$ such that for every 
$x\in X$ there exists a local embedding $\tau:U\to G\subset\C^N$ 
with $x\in U$ and a (strictly) psh function 
$\widetilde\varphi:G\to[-\infty,\infty)$ such that 
$\varphi|_U=\widetilde\varphi\circ\tau$. If $\widetilde\varphi$ 
can be chosen continuous (resp.\ smooth), then $\varphi$ is 
called a continuous (resp.\ smooth) psh function. 
The definition is independent of the chart, as is seen from 
\cite[Lemma\,4]{Nar62}. 
The analogue of Riemann's second extension theorem 
for psh functions holds on normal complex spaces 
\cite[Satz\,4]{GR56}. In particular, every psh function on 
$X_{\rm reg}$ extends uniquely to a psh function on $X$. 
We let $PSH(X)$ denote the set of psh functions on $X$, 
and refer to \cite{GR56}, \cite{Nar62}, \cite{FN80}, \cite{D85} 
for the properties of psh functions on $X$. We recall here that 
psh functions on $X$ are locally integrable with respect to the 
area measure on $X$ given by any local embedding 
$\tau:U\to G\subset\C^N$ \cite[Proposition 1.8]{D85}.

Let $X$ be a complex space of pure dimension $n$. 
We consider currents on $X$ as defined in \cite{D85}
and we denote by $\mathcal D'_{p,q}(X)$ the space of 
currents of bidimension $(p,q)$, or bidegree $(n-p,n-q)$ on $X$.     
In particular, if $v\in PSH(X)$ then 
$dd^cv\in\mathcal D'_{n-1,n-1}(X)$ is positive and closed. 
Let $\cT(X)$ be the space of positive closed currents 
of bidegree $(1,1)$ on $X$ which have local psh potentials: 
$T\in\cT(X)$ if every $x\in X$ has a neighborhood $U$
(depending on $T$) such that there exists a psh function $v$ on 
$U$ with $T=dd^cv$ on $U\cap X_{\rm reg}$. 
Most of the currents considered here, such as the curvature currents 
$c_1(L_p,h_p)$ and the Fubini-Study currents $\gamma_p$, 
belong to $\cT(X)$. 
A K\"ahler form on $X$ is a current $\omega\in\cT(X)$ 
whose local potentials 
extend to smooth strictly psh functions in local embeddings of $X$ 
to Euclidean spaces. We call $X$ a K\"ahler space if $X$ admits
a K\"ahler form (see also \cite[p.\,346]{Gra:62}, 
\cite{O87}, \cite[Sec. 5]{EGZ09}). 

\subsection{Singular Hermitian holomorphic line bundles 
on analytic spaces}\label{SS:lb} 
Let $L$ be a holomorphic line bundle on a normal K\"ahler space
$(X,\omega)$. 
The notion of singular Hermitian metric $h$ on $L$ is defined 
exactly as in the smooth case 
(see \cite{D90}, \cite[p.\,97]{MM07})): if $e_\alpha$
  is a holomorphic frame of $L$ over an open set 
  $U_\alpha\subset X$
   then $|e_\alpha|^2_h=e^{-2\varphi_\alpha}$ where 
   $\varphi_\alpha\in L^1_{loc}(U_\alpha,\omega^n)$.
 If $g_{\alpha\beta}=e_\beta/e_\alpha\in\mathcal O^*_X
 (U_\alpha\cap U_\beta)$ 
    are the transition functions of $L$ then 
    $\varphi_\alpha=\varphi_\beta+\log|g_{\alpha\beta}|$. 
The curvature current $c_1(L,h)\in\mathcal D'_{n-1,n-1}(X)$ of $h$
 is defined by $c_1(L,h)=dd^c\varphi_\alpha$ on 
 $U_\alpha\cap X_{\rm reg}$. We will denote by $h^p$ 
 the singular Hermitian metric induced by $h$ on 
$L^p:=L^{\otimes p}$. If $c_1(L,h)\geq0$ then the weight 
$\varphi_\alpha$
is psh on $U_\alpha\cap X_{\rm reg}$ and since $X$ is normal
it extends to a psh function on $U_\alpha$ \cite[Satz\,4]{GR56}, 
        hence $c_1(L,h)\in\cT(X)$. 
\par Let $L$ be a holomorphic line bundle on a compact normal 
K\"ahler space $(X,\omega)$. Then the space $H^0(X,L)$ of 
holomorphic sections of $L$ is finite dimensional (see e.g.\ 
\cite[Th\'eor\`eme 1,\,p.27]{An:63}). The space $H^0_{(2)}(X,L)$ 
defined as in \eqref{e:bs} is therefore also finite dimensional.

For $p\geq1$, we consider the space $H^0_{(2)}(X,L_p)$ 
defined in \eqref{e:bs}. Recall that $d_p=\dim H^0_{(2)}(X,L_p)$ and 
$S_1^p,\dots,S_{d_p}^p$ is an orthonormal 
basis of $H^0_{(2)}(X,L_p)$. If $x\in X$ and $e_p$ is a
local holomorphic frame of $L_p$ in a neighborhood $U_p$ of $x$ 
we write $S_j^p=s_j^pe_p$, where $s_j^p\in\mathcal O_X(U_p)$. 
Then the Bergman kernel functions and
the Fubini-Study currents of the spaces $H^0_{(2)}(X,L_p)$
are defined as follows:
\begin{equation}\label{e:BFS1}
P_p(x)=\sum_{j=1}^{d_p}|S^p_j(x)|_{h_p}^2\;,\;\;
\gamma_p\vert_{U_p}=\frac{1}{2}\,dd^c
\log\left(\sum_{j=1}^{d_p}|s_j^p|^2\right),
\end{equation}
where $d=\partial+\overline\partial$ and $d^c=\frac{1}{2\pi i}(\partial-\overline\partial)$. 
Note that $P_p,\,\gamma_p$ are independent of the choice 
of basis $S_1^p,\dots,S_{d_p}^p$. 
It follows from \eqref{e:BFS1} that 
$\log P_p\in L^1(X,\omega^n)$ and 
\begin{equation}\label{e:BFS2}
\gamma_p-c_1(L_p,h_p)=\frac{1}{2}\,dd^c\log P_p\,.
\end{equation}
Moreover, as in \cite{CM11,CM13}, one has that 
\begin{equation}\label{e:Bergvar}
P_p(x)=\max\big\{|S(x)|^2_{h_p}:\,S\in H^0_{(2)}(X,L_p),\;
\|S\|_p=1\big\},
\end{equation}
for all $x\in X$ where $|e_p(x)|_{h_p}<\infty$.

We recall that if $S\in H^0(X,L_p)$ the Lelong-Poincar\'e formula shows that
\begin{equation}\label{e:LP}
[S=0]=c_1(L_p,h_p)+dd^c\log|S|_{h_p}\,.
\end{equation}
This follows exactly as in the case when $X$ is smooth (see \cite[Theorem 2.3.3]{MM07}). Indeed, if $X$ is a compact (reduced) analytic space of pure dimension and $S\in H^0(X,L_p)$, the current of integration $[S=0]\in\cT(X)$ is defined as the current with local psh potentials of the form $\log|s|$, where $S=s e_p$, $s\in\mathcal O_X(U_p)$, and $e_p$ is a holomorphic frame of $L_p$ on the open set $U_p\subset X$. If $|e_p|_{h_p}=e^{-\varphi}$, then $\log|S|_{h_p}=\log|s|-\varphi$, which gives \eqref{e:LP}.


\subsection{Special weights of Hermitian metrics on reference covers}\label{SS:refcov} 
Let \((X,\omega)\) be a compact K\"ahler 
manifold of dimension $n$. Let $(U,z)$, $z=(z_1,\ldots,z_n)$, 
be local coordinates centered at a point $x\in X$. For $r>0$ and 
$y\in U$ we denote by
\[\Delta^n(y,r)=\{z\in U: |z_j-y_j|\leq r,\:j=1,\ldots,n\}\]
the (closed) polydisk of polyradius $(r,\ldots,r)$ centered at $y$.
The coordinates $(U,z)$ are called K\"ahler at $y\in U$ if 
\begin{equation}
\omega_z=\frac i2\sum_{j=1}^{n}dz_j\wedge d\overline{z}_j
+O(|z-y|^2)\:\:\text{on \(U\)}.
\end{equation}

\begin{Definition}[{\cite[Definition 2.6]{CMM}}]\label{D:refcov}
A \emph{reference cover} of \(X\) consists of the following data: 
for $j=1,\ldots,N$, a set of points \(x_j\in X\) and 
\begin{enumerate}
  \item  Stein open simply connected coordinate neighborhoods 
  \((U_j,w^{(j)})\) centered at \(x_j\equiv0\),
  \item  \(R_j>0\) such that \(\Delta^n(x_j,2R_j)\Subset U_j\) 
  and for every \(y\in\Delta^n(x_j,2R_j)\) there exist coordinates 
  on \(U_j\) which are K\"ahler at \(y\),
  \item \(X=\bigcup_{j=1}^N\Delta^n(x_j,R_j)\). 
\end{enumerate}
Given the reference cover as above we set \(R=\min R_j\).
\end{Definition}

\par We can construct a reference cover as in \cite[Section 2.5]{CMM}.
On $U_j$ we consider the differential operators $D^\alpha_w$\,, 
$\alpha\in\N^{2n}$, corresponding to the real coordinates 
associated to $w=w^{(j)}$. For a function $\varphi\in\cC^k(U_j)$ 
we set 
\begin{equation}\label{bk:0.1}
\|\varphi\|_{k}=\|\varphi\|_{k,w}
=\sup\big\{|D^\alpha_w\varphi(w)|:\,w\in\Delta^n(x_j,2R_j),
|\alpha|\leq k\big\}.
\end{equation}
Let \((L,h)\) be a Hermitian holomorphic line bundle on \(X\), 
where 
the metric $h$ is of class $\cC^\ell$. Note that \(L|_{U_j}\) is 
trivial. For $k\leq\ell$ set
\begin{equation}\label{bk:0.2}
\begin{split}
\|h\|_{k,U_j}&=\inf\big\{\|\varphi_j\|_k:\,\varphi_j
\in \cC^\ell(U_j)\text{ is a weight of $h$ on $U_j$}\big\},\\
\|h\|_k&=\max\big\{1,\|h\|_{k,U_j}:\,1\leq j\leq N\big\}.
\end{split}
\end{equation}
Recall that \(\varphi_j\) is a weight of \(h\) on \(U_j\) if there
exists a holomorphic frame \(e_j\) of \(L\) on \(U_j\) 
such that \(|e_j|_h=e^{-\varphi_j}\). We have the following:

\begin{Lemma}[{\cite[Lemma 2.7]{CMM}}]\label{L:rc}
There exists a constant \(C>1\) (depending on the reference cover) 
with the following property: 
Given any Hermitian holomorphic line bundle \((L,h)\) on \(X\), where $h$
is of class $\mathscr{C}^3$, any \(j\in\{1,\ldots,N\}\) 
and any \(x\in\Delta^n(x_j,R_j)\) there exist coordinates 
\(z=(z_1,\ldots,z_n)\) on \(\Delta^n(x,R)\) which are centered 
at \(x\equiv0\) and K\"ahler coordinates for \(x\) such that

\par (i) $n!\,dm\leq(1+Cr^2)\omega^n$ and 
$\omega^n\leq(1+Cr^2)n!\,dm$ hold
 on \(\Delta^n(x,r)\) for any \(r<R\) where \(dm=dm(z)\)
  is the Euclidean volume relative to the coordinates $z$\,,
  
\par (ii) \((L,h)\) has a weight \(\varphi\) on \(\Delta^n(x,R)\) 
with \(\varphi(z)=\sum_{j=1}^n\lambda_j|z_j|^2
+\widetilde{\varphi}(z)\), where \(\lambda_j\in\R\) and 
\(|\widetilde{\varphi}(z)|\leq C\|h\|_3|z|^3\) for 
\(z\in\Delta^n(x,R)\). 
\end{Lemma}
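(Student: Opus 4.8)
The plan is to produce the required coordinates in two stages --- first normalizing the K\"ahler form to obtain (i), then normalizing the weight to obtain (ii) --- and afterwards to verify that every implied constant can be taken uniform in $(L,h)$, in $j$, and in $x\in\Delta^n(x_j,R_j)$, which is what makes a single $C$ possible. First I would use part (2) of Definition \ref{D:refcov}: since $x\in\Delta^n(x_j,R_j)\subset\Delta^n(x_j,2R_j)$, there exist coordinates on $U_j$ that are K\"ahler at $x$, and after a harmless translation I may assume they are centered at $x\equiv 0$. Writing $\omega=\frac i2\sum_{k,l}g_{k\ov l}\,dz_k\wedge d\ov z_l$, the K\"ahler-at-$x$ condition forces $g_{k\ov l}(0)=\delta_{kl}$ together with the vanishing of all first-order derivatives of $g_{k\ov l}$ at $0$, so $\det(g_{k\ov l})(z)=1+O(|z|^2)$. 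Since $\omega^n=n!\,\det(g_{k\ov l})\,dm$, this yields (i); the constant in the $O(|z|^2)$ term is controlled by the sup of the second derivatives of the metric coefficients, and since $X$ is compact and the reference cover is finite this sup is uniform, giving one constant $C$.

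Next, to obtain (ii), I would take a $\cC^\ell$ weight $\varphi$ of $h$ on $U_j$ and Taylor-expand it to second order at $0$,
\[\varphi(z)=\re P(z)+\sum_{k,l}c_{k\ov l}z_k\ov z_l+\widetilde\varphi_0(z),\]
where $P$ is a holomorphic polynomial of degree $\leq2$ encoding the pluriharmonic part of the expansion, the matrix $\big(c_{k\ov l}\big)=\big(\partial_{z_k}\partial_{\ov z_l}\varphi(0)\big)$ is Hermitian (as $\varphi$ is real), and Taylor's theorem with remainder bounds $|\widetilde\varphi_0(z)|$ by a constant times $\|h\|_3\,|z|^3$. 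Replacing the holomorphic frame $e$ by $e^{P}e$ --- legitimate because $U_j$ is simply connected, so $e^{P}$ is holomorphic and nonvanishing --- changes the weight to $\varphi-\re P$ and thus removes the pluriharmonic part, leaving the weight $\sum_{k,l}c_{k\ov l}z_k\ov z_l+\widetilde\varphi_0(z)$. Since $P$ has degree $\leq2$, its third derivatives vanish, so this frame change does not disturb the cubic remainder.

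I would then diagonalize the Hermitian form by a unitary linear change of coordinates $z\mapsto Uz$, which produces real eigenvalues $\lambda_1,\dots,\lambda_n$ and rewrites the quadratic part as $\sum_j\lambda_j|z_j|^2$. A unitary change preserves the K\"ahler-at-$x$ normalization (being linear and isometric at $0$, it preserves $g_{k\ov l}(0)=\delta_{kl}$ and the vanishing of the first derivatives), leaves $dm$ and hence statement (i) unchanged, and satisfies $|Uz|=|z|$, so the transformed remainder $\widetilde\varphi$ still obeys $|\widetilde\varphi(z)|\leq C\|h\|_3|z|^3$. After renaming $Uz$ as $z$ this gives exactly the form in (ii).

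The main obstacle is the uniformity of $C$ in (ii). The norm $\|h\|_3$ is measured in the fixed reference coordinates $w^{(j)}$ over $\Delta^n(x_j,2R_j)$, whereas the cubic bound on $\widetilde\varphi_0$ is naturally expressed through third derivatives of $\varphi$ in the normalized coordinates $z$; I must therefore control the chain-rule passage between the two. This is where the compactness of $X$ and the finiteness of the reference cover are decisive: the K\"ahler coordinates at $x$ depend only on $\omega$ and vary in a $\cC^3$-controlled way as $x$ ranges over the compact set $\overline{\Delta^n(x_j,R_j)}$, and the rotation $U$ is unitary hence has constant, uniformly bounded derivatives, so third derivatives in $z$ and in $w^{(j)}$ differ by a factor independent of $(L,h)$ and of $x$; taking the infimum over weights defining $\|h\|_3$ then yields a uniform constant. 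One must also confirm that $\Delta^n(x,R)$, measured in the new coordinates, still lies inside $U_j$ so that $\varphi$ and the frame change are defined there; this is exactly what the construction of the reference cover in \cite[Section 2.5]{CMM} arranges, since $R=\min_jR_j$ is chosen so that the normalized polydisks of radius $R$ about any $x\in\Delta^n(x_j,R_j)$ remain inside $\Delta^n(x_j,2R_j)\Subset U_j$.
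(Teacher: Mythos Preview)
The paper does not prove this lemma: it is quoted verbatim from \cite[Lemma 2.7]{CMM} and used as a black box, so there is no ``paper's own proof'' to compare against. Your argument is the standard one and is correct in outline --- K\"ahler normalization of $\omega$ for (i), then removal of the pluriharmonic second-order Taylor jet of the weight by a holomorphic gauge change followed by a unitary diagonalization of the Hermitian part for (ii) --- and is essentially what one finds in \cite{CMM}.

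One small point worth tightening: the final unitary rotation $z\mapsto Uz$ depends on $(L,h)$ (through the Hessian of $\varphi$ at $0$), and unitaries do \emph{not} preserve polydisks, only balls. Hence the polydisk $\Delta^n(x,R)$ in the rotated coordinates need not lie inside the polydisk $\Delta^n(x,R)$ in the pre-rotation K\"ahler coordinates; it lies only in the ball of radius $\sqrt{n}\,R$, hence in $\Delta^n(x,\sqrt{n}\,R)$. Your appeal to the construction in \cite[Section 2.5]{CMM} is the right fix --- one simply builds the reference cover with enough room (e.g.\ shrinking $R$ by a factor $\sqrt{n}$) so that this enlarged polydisk still sits inside $\Delta^n(x_j,2R_j)\Subset U_j$ --- but you should make this dimensional loss explicit rather than folding it into ``this is exactly what the construction arranges.''
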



\section{Bergman kernel asymptotics}\label{S:berg}
We prove in Section \ref{SS:db} an $L^2$-estimate for the
solution of the $\overline\partial$-equation in the spirit
of Donnelly-Fefferman, which is used in Section \ref{SS:pfTB}
to prove Theorem \ref{T:Bergman}.
  
\subsection{$L^2$-estimates for $\overline{\partial}$}\label{SS:db} 
Let us recall the following version of Demailly's estimates for the $\db$ 
operator \cite[Th\'eor\`eme 5.1]{D82}.

\begin{Theorem}[{\cite[Theorem 2.5]{CMM}}]\label{T:db} Let $Y$, $\dim Y=n$,
be a complete K\"ahler manifold and let $\Omega$ be a K\"ahler
form on $Y$ (not necessarily complete) such that its Ricci form satisfies
$\ric_\Omega\geq-2\pi B\Omega$ on $Y$, for some constant 
    $B>0$. Let $(L_p,h_p)$ be singular Hermitian holomorphic line 
    bundles on $Y$ such that $c_1(L_p,h_p)\geq2a_p\Omega$, 
    where $a_p\to\infty$ as $p\to\infty$, and fix $p_0$ such that 
    $a_p\geq B$ for all $p>p_0$. If $p>p_0$ and 
    $g\in L^2_{0,1}(Y,L_p,loc)$ verifies $\db g=0$ and 
    $\int_Y|g|^2_{h_p}\,\Omega^n<\infty$ then there exists 
    $u\in L^2_{0,0}(Y,L_p,loc)$ such that $\db u=g$ and 
    $\int_Y|u|^2_{h_p}\,\Omega^n\leq\frac{1}{a_p}\,
    \int_Y|g|^2_{h_p}\,\Omega^n$. 
\end{Theorem}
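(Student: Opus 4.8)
The plan is to establish this as an instance of the Bochner--Kodaira--Nakano--H\"ormander $L^2$-method, in the form due to Demailly. The preliminary move is to pass from $L_p$-valued $(0,1)$-forms to $(n,1)$-forms: a section $g\in L^2_{0,1}(Y,L_p)$ is canonically identified, via the volume form of $\Omega$, with an $(n,1)$-form valued in the twisted bundle $E_p:=L_p\otimes K_Y^{-1}$, where $K_Y^{-1}$ carries the Hermitian metric induced by $\Omega$. The gain is that for $(n,\bullet)$-forms the Bochner--Kodaira--Nakano identity involves no separate Ricci term; the Ricci curvature of $\Omega$ instead reappears as the curvature of $K_Y^{-1}$, with $c_1(K_Y^{-1})=\tfrac1{2\pi}\ric_\Omega$. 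Hence the effective curvature of $E_p$ (for the tensor-product metric) satisfies
\[
c_1(E_p)=c_1(L_p,h_p)+\tfrac1{2\pi}\ric_\Omega\geq 2a_p\Omega-B\Omega\geq a_p\Omega\qquad(p>p_0),
\]
where the last step uses precisely the calibration $a_p\geq B$. The factor $2$ in the hypothesis $c_1(L_p,h_p)\geq 2a_p\Omega$ is exactly what absorbs the Ricci loss and leaves the clean bound $a_p\Omega$, which is what produces the constant $1/a_p$.

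Next I would set up the functional-analytic core on the complete K\"ahler manifold $Y$. For $(n,1)$-forms $\alpha$ with values in $E_p$ in $\Dom(\db)\cap\Dom(\db^*)$, the Nakano inequality together with the effective positivity $c_1(E_p)\geq a_p\Omega$ gives the basic a priori estimate
\[
\|\db\alpha\|^2+\|\db^*\alpha\|^2\geq\langle[i\Theta(E_p),\Lambda]\alpha,\alpha\rangle\geq a_p\|\alpha\|^2 .
\]
Granting this, H\"ormander's duality argument yields the solution. Since $\db g=0$, decompose any $\alpha\in\Dom(\db^*)$ orthogonally as $\alpha=\alpha_1\oplus\alpha_2$ with $\alpha_1\in\ker\db$ and $\alpha_2\in(\ker\db)^\perp\subseteq\ker\db^*$; then $\langle g,\alpha\rangle=\langle g,\alpha_1\rangle$ and $\db^*\alpha=\db^*\alpha_1$, so the basic estimate applied to $\alpha_1$ gives $\|\alpha_1\|^2\leq a_p^{-1}\|\db^*\alpha\|^2$. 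By Cauchy--Schwarz, $|\langle g,\alpha\rangle|^2\leq a_p^{-1}\|g\|^2\,\|\db^*\alpha\|^2$, and the Hahn--Banach/Riesz representation theorem produces $u$ with $\db u=g$ and $\|u\|^2\leq a_p^{-1}\|g\|^2$, which is the asserted estimate once translated back to $L_p$-valued $(0,0)$-forms.

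The genuine difficulty is to justify the basic estimate, since $\Omega$ is not assumed complete and $h_p$ may be singular. For the completeness issue I would follow Demailly: as $Y$ is complete K\"ahler it carries a complete K\"ahler metric $\omega_0$, and one runs the argument with the complete metrics $\Omega_\delta:=\Omega+\delta\,\omega_0$, for which the Andreotti--Vesentini/Gaffney density theorem makes smooth compactly supported forms dense in the graph norm of $\db\oplus\db^*$, legitimizing the integration by parts behind the Bochner--Kodaira--Nakano identity. The curvature lower bound only improves when $\Omega$ is replaced by $\Omega_\delta$, so one obtains $u_\delta$ with $\int_Y|u_\delta|^2_{h_p}\,\Omega_\delta^{\,n}\leq a_p^{-1}\int_Y|g|^2_{h_p}\,\Omega_\delta^{\,n}$; letting $\delta\to0$ and combining monotone convergence with weak $L^2$-compactness yields a solution satisfying the estimate relative to $\Omega$ itself. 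A singular $h_p$ is handled by approximating its local weights from above by a decreasing sequence of smooth psh weights retaining $c_1\geq 2a_p\Omega$, solving with the uniform constant $a_p^{-1}$, and extracting a weak limit. I expect the main obstacle to be precisely this limiting bookkeeping: keeping the effective positivity uniform as $\delta\to0$ while the volume $\Omega_\delta^{\,n}$ and the pointwise curvature operator both vary, and ensuring the weak limits still solve $\db u=g$ in the sense of currents.
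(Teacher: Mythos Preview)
The paper does not actually prove this statement: it is quoted verbatim from \cite[Theorem~2.5]{CMM} and presented there, in turn, as a version of Demailly's $L^2$ existence theorem \cite[Th\'eor\`eme~5.1]{D82}. So there is no ``paper's own proof'' to compare against; the theorem is used as a black box.

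That said, your outline is the correct one and is precisely Demailly's argument underlying the cited result: the twist from $(0,1)$-forms with values in $L_p$ to $(n,1)$-forms with values in $L_p\otimes K_Y^{-1}$, the curvature bookkeeping $c_1(L_p,h_p)+\tfrac{1}{2\pi}\ric_\Omega\geq(2a_p-B)\Omega\geq a_p\Omega$ for $p>p_0$, the Bochner--Kodaira--Nakano inequality on $(n,1)$-forms, and the H\"ormander functional-analytic step are all exactly as in \cite{D82}. The regularization $\Omega_\delta=\Omega+\delta\omega_0$ with a complete K\"ahler metric $\omega_0$ to justify the integration by parts, followed by a weak limit as $\delta\to0$, is also Demailly's device. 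One small caution in your write-up: the sentence ``the curvature lower bound only improves when $\Omega$ is replaced by $\Omega_\delta$'' is not literally true, since $c_1(L_p,h_p)\geq 2a_p\Omega$ does not imply $c_1(L_p,h_p)\geq 2a_p\Omega_\delta$; what Demailly shows is rather that for $(n,q)$-forms the relevant curvature operator $[i\Theta,\Lambda_{\Omega_\delta}]$ and the $L^2$ norms behave monotonically in the right way so that the estimate with constant $1/a_p$ survives the limit $\delta\to0$. With that nuance, your sketch matches the source argument.
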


The next result gives a weighted estimate for the solution of the $\db$-equation
which goes back to Donnelly-Fefferman \cite{DoFe83}.
The idea is to twist with a not necessarily plurisubharmonic 
weight whose gradient is however controlled in terms
of its complex Hessian. We follow here \cite[Theorem 4.3]{Berman},
similar estimates were used for $\C^n$ in \cite{Delin,Lin01}. 

\begin{Theorem}\label{T:dbar}
Let $(X,\omega)$ be a compact K\"ahler manifold, $\dim X=n$, 
and $(L_p,h_p)$ be singular Hermitian holomorphic line bundles on $X$ 
such that $h_p$ have locally bounded weights and $c_1(L_p,h_p)\geq a_p\omega\,$, 
where $a_p\to\infty$ as $p\to\infty$. 
Then there exists $p_0\in\N$ with the following property: 
If $v_p$ are real valued functions of class $\cC^2$ on $X$ such that 
\begin{equation}\label{e:vp}
\|\db v_p\|_{L^\infty(X)}\leq\frac{\sqrt{a_p}}{8}\,,\,\;dd^cv_p\geq-\frac{a_p}{2}\,\omega\,,
\end{equation}
then 
\[\int_X|u|_{h_p}^2e^{2v_p}\,\omega^n\leq
\frac{16}{a_p}\,\int_X|\db u|_{h_p}^2e^{2v_p}\,\omega^n\]
holds for $p>p_0$ and for every $\cC^1$-smooth section $u$ of $L_p$ 
which is orthogonal to $H^0(X,L_p)$ with respect to the inner product induced 
by $h_p$ and $\omega^n$.
\end{Theorem}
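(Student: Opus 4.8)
The plan is to deduce the estimate from a weighted $L^2$-solvability theorem for $\db$ of Donnelly--Fefferman type, following \cite[Theorem 4.3]{Berman}, together with a duality argument that turns the orthogonality of $u$ into the desired bound. Throughout I write the $L^2(h_p,\omega^n)$ inner product as $\langle\cdot,\cdot\rangle_p$; since the weights of $h_p$ are locally bounded and $X$ is compact, every holomorphic section is $L^2$, so $H^0(X,L_p)=H^0_{(2)}(X,L_p)=\ker\db$ and the orthogonality hypothesis on $u$ is meaningful.

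First I would reduce to a solvability statement. Set $g:=e^{2v_p}u$, a $\cC^1$ section of $L_p$ with $\db g=e^{2v_p}\big(\db u+2u\,\db v_p\big)$, and let $\tilde g$ be \emph{any} $L^2$ solution of $\db\tilde g=\db g$. Then $g-\tilde g\in\ker\db=H^0(X,L_p)$, so the orthogonality of $u$ gives $\langle u,g\rangle_p=\langle u,\tilde g\rangle_p$, whence
\[\int_X|u|^2_{h_p}e^{2v_p}\,\omega^n=\langle u,\tilde g\rangle_p\leq\Big(\int_X|u|^2_{h_p}e^{2v_p}\,\omega^n\Big)^{1/2}\Big(\int_X|\tilde g|^2_{h_p}e^{-2v_p}\,\omega^n\Big)^{1/2}\]
by Cauchy--Schwarz with the weight split as $e^{v_p}\cdot e^{-v_p}$. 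Therefore $\int_X|u|^2_{h_p}e^{2v_p}\omega^n\leq\int_X|\tilde g|^2_{h_p}e^{-2v_p}\omega^n$, and everything comes down to producing a solution $\tilde g$ of $\db\tilde g=\db g$ whose $e^{-2v_p}$-weighted norm is controlled by $\db g$.

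For this I would use the twisted Bochner--Kodaira--Nakano inequality: for a positive twisting function $\tau$ and an auxiliary positive function $A$, the relevant lower bound for $(0,1)$-forms is governed by the curvature term $\tau\,c_1(L_p,h_p)$ corrected by the complex Hessian $dd^c\tau$ and a Cauchy--Schwarz term built from $\partial\tau$. I would choose $\tau$ (and the background weight) adapted to the factor $e^{2v_p}$ so that, on the one hand, the resulting solution is measured precisely in the $e^{-2v_p}$-weighted norm and, on the other hand, the second-order part of the correction reduces to a multiple of $dd^cv_p$ and the first-order part to a multiple of $\partial v_p\wedge\overline{\partial v_p}$. The two hypotheses in \eqref{e:vp} are exactly what make the resulting twisted curvature form positive: $dd^cv_p\geq-\tfrac{a_p}{2}\,\omega$ prevents the Hessian correction from cancelling $c_1(L_p,h_p)\geq a_p\omega$, while $\|\db v_p\|_{L^\infty}\leq\tfrac{\sqrt{a_p}}{8}$ forces $|\db v_p|^2\leq\tfrac{a_p}{64}$ and hence keeps the first-order correction small. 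Once $a_p$ is large (this fixes $p_0$) the bracket stays bounded below by a fixed positive multiple of $a_p\,\omega$, and the dual form of the inequality yields a solution $\tilde g$ of $\db\tilde g=\alpha$ with $\int_X|\tilde g|^2_{h_p}e^{-2v_p}\omega^n\leq\tfrac{C}{a_p}\int_X|\alpha|^2_{h_p}e^{-2v_p}\omega^n$ for every $\db$-closed $\alpha$.

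Applying this to $\alpha=\db g=e^{2v_p}(\db u+2u\,\db v_p)$ and expanding $|\alpha|^2_{h_p}e^{-2v_p}=e^{2v_p}|\db u+2u\,\db v_p|^2_{h_p}$, the gradient term contributes at most $\tfrac{C}{a_p}\cdot\tfrac{a_p}{16}\int_X|u|^2_{h_p}e^{2v_p}\omega^n$ (using $|\db v_p|^2\leq a_p/64$), which is absorbed into the left-hand side, while the $\db u$ term contributes $\tfrac{C}{a_p}\int_X|\db u|^2_{h_p}e^{2v_p}\omega^n$; tracking the constants produces the factor $\tfrac{16}{a_p}$. I expect the genuine difficulty to be the third step: arranging the single twist $\tau$ so that simultaneously the solution lands in the correct weight $e^{-2v_p}$ \emph{and} the twisted curvature bracket stays positive using only the one-sided bound $dd^cv_p\geq-\tfrac{a_p}{2}\omega$ together with the gradient bound. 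It is the interplay of the two hypotheses on $v_p$---not either one alone---that drives the estimate, and making the numerical constants close the absorption argument (hence the specific constant $\tfrac18$ in the gradient bound) is the delicate point.
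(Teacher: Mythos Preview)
Your overall architecture is correct and matches the paper's: set $g=e^{2v_p}u$, reduce via orthogonality to controlling an $e^{-2v_p}$-weighted norm of a solution of $\db\tilde g=\db g$, then absorb the $u\,\db v_p$ term using $|\db v_p|^2\leq a_p/64$. Your Cauchy--Schwarz reduction is an equivalent rephrasing of the paper's observation that $e^{2v_p}u$ is orthogonal to $H^0(X,L_p)$ for the metric $g_p:=h_pe^{-2v_p}$ and is therefore the $g_p$-minimal solution.

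The one place where you take a longer road is the solvability step. You describe it as requiring a twisted Bochner--Kodaira--Nakano inequality with a carefully chosen twist $\tau$ and auxiliary function $A$, and you flag this as the ``genuine difficulty''. In fact no twist is needed: since $|\cdot|^2_{h_p}e^{-2v_p}=|\cdot|^2_{g_p}$, the statement ``solve $\db\tilde g=\alpha$ with $\int_X|\tilde g|^2_{h_p}e^{-2v_p}\omega^n\leq\tfrac{C}{a_p}\int_X|\alpha|^2_{h_p}e^{-2v_p}\omega^n$'' is exactly the standard H\"ormander--Demailly estimate (Theorem~\ref{T:db} of the paper) for the metric $g_p$, whose curvature satisfies $c_1(L_p,g_p)=c_1(L_p,h_p)+dd^cv_p\geq a_p\omega-\tfrac{a_p}{2}\omega=\tfrac{a_p}{2}\omega$. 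This is where the Hessian hypothesis in \eqref{e:vp} enters, and it gives $C=4$ directly; the role of $p_0$ is only to ensure $a_p/4$ dominates the Ricci lower bound of $\omega$. With $C=4$ your absorption argument then closes exactly as you wrote and yields the constant $16/a_p$. So your plan works, but the step you expected to be delicate is the simplest one once you absorb the weight into the metric rather than carrying it as a twist.
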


\begin{proof}
We fix a constant $B>0$ such that $\ric_\omega\geq-2\pi B\omega$ on $X$ and $p_0$ 
such that $a_p\geq4B$ if $p>p_0$. Consider the metric $g_p=h_pe^{-2v_p}$ on $L_p$. 
Then by \eqref{e:vp},
\[c_1(L_p,g_p)=c_1(L_p,h_p)+dd^cv_p\geq\frac{a_p}{2}\,\omega.\]
Moreover
\[\big(e^{2v_p}u,S\big)_{g_p}:=\int_X\langle e^{2v_p}u,S\rangle_{g_p}\,\frac{\omega^n}{n!}=
\int_X\langle u,S\rangle_{h_p}\,\frac{\omega^n}{n!}=0\,,\,\;\forall\,S\in H^0(X,L_p).\]
Let $\alpha=\db\big(e^{2v_p}u\big)=e^{2v_p}(2\db v_p\wedge u+\db u)$. 
By Theorem \ref{T:db} there exists a section 
$\wi u\in L^2_{0,0}(X,L_p)$ such that $\db\wi u=\alpha$ and 
\[\int_X\big|e^{2v_p}u\big|^2_{g_p}\,\omega^n\leq\int_X|\wi u|^2_{g_p}\,\omega^n\leq
\frac{4}{a_p}\,\int_X|\alpha|^2_{g_p}\,\omega^n,\]
where the first inequality follows since $e^{2v_p}u$ is orthogonal to 
$H^0(X,L_p)$ with respect to the inner product $(\cdot,\cdot)_{g_p}$. Using \eqref{e:vp} we obtain  
\[|\alpha|^2_{g_p}=e^{2v_p}|2\db v_p\wedge u+\db u|^2_{h_p}\leq
2e^{2v_p}(4|\db v_p\wedge u|^2_{h_p}+|\db u|^2_{h_p})\leq2e^{2v_p}
\big(\frac{a_p}{16}\,|u|^2_{h_p}+|\db u|^2_{h_p}\big).\] 
It follows that 
\[\int_X|u|^2_{h_p}e^{2v_p}\,\omega^n\leq
\frac{1}{2}\,\int_X|u|^2_{h_p}e^{2v_p}\,\omega^n+
\frac{8}{a_p}\,\int_X|\db u|^2_{h_p}e^{2v_p}\,\omega^n,\]
which implies the conclusion.
\end{proof}

\subsection{Proof of Theorem \ref{T:Bergman}}\label{SS:pfTB}
We recall the following result about the first term asymptotic expansion of the Bergman kernel function $P_p(x)=P_p(x,x)$ (see \eqref{e:BFS1}):

\begin{Theorem}[{\cite[Theorem 1.3]{CMM}}]\label{T:B1}
Let $(X,\omega)$ be a compact K\"ahler manifold of dimension $n$. 
Let $(L_p,h_p)$, $p\geq1$, be a sequence of holomorphic 
line bundles on $X$ with Hermitian metrics $h_p$ of class $\cC^3$ 
whose curvature forms verify \eqref{e:pc} and such that \eqref{e:3d} holds. 
Then there exist $C>0$ depending only on $(X,\omega)$ 
and $p_0\in\mathbb N$ such that  
\begin{equation}\label{e:Bexp00}
\left|P_p(x)\,\frac{\omega^n_x}{c_1(L_p,h_p)^n_x}-1\right|\leq C\varepsilon_p^{2/3}
\end{equation}
holds for every $x\in X$ and $p>p_0$. 
\end{Theorem}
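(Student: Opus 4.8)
The plan is to establish \eqref{e:Bexp00} as a two-sided estimate, exploiting the variational characterization \eqref{e:Bergvar} of the Bergman kernel function and reducing both bounds to an explicit Bargmann--Fock model computation after rescaling. Fix $x\in X$ and apply Lemma \ref{L:rc} to obtain K\"ahler coordinates $z$ on $\Delta^n(x,R)$ centered at $x\equiv0$ together with a weight $\varphi$ of $h_p$ of the form $\varphi(z)=\sum_{j=1}^n\lambda_j|z_j|^2+\wi\varphi(z)$ with $|\wi\varphi(z)|\le C\|h_p\|_3|z|^3$. Since $c_1(L_p,h_p)=dd^c\varphi\ge a_p\omega$ and the coordinates are K\"ahler at $x$, the eigenvalues satisfy $\lambda_j\ge c\,a_p$, and by construction $\beta_p:=c_1(L_p,h_p)^n_x/\omega^n_x=\prod_{j=1}^n(2\lambda_j/\pi)$ is exactly the diagonal value of the associated Bargmann--Fock kernel. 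The decisive normalization is the dilation $z=w/\sqrt{a_p}$: it turns $2\varphi$ into the quadratic model weight $\sum 2(\lambda_j/a_p)|w_j|^2$, while the cubic remainder becomes $|\wi\varphi(w/\sqrt{a_p})|\le C\|h_p\|_3 a_p^{-3/2}|w|^3=C\varepsilon_p^3|w|^3$, so that $\varepsilon_p$ enters precisely as the size of the deviation from the model on balls of fixed $w$-radius.

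For the upper bound I would take a section $S=s\,e_p$ realizing the maximum in \eqref{e:Bergvar}, so that $\|S\|_p=1$ and $P_p(x)=|s(0)|^2$. Applying the sub-mean value inequality to the holomorphic function $s$ on the rescaled polydisk against the nearly Gaussian weight $e^{-2\varphi}$, and comparing with the exact model integral whose diagonal value is $\beta_p$, gives $P_p(x)\le\beta_p(1+C\varepsilon_p^{2/3})$; the cubic remainder and the Gaussian truncation are the only sources of error, and optimizing the radius of the polydisk produces the exponent $2/3$. For the lower bound I would run the standard peak-section construction: starting from the model ground state (the constant $1$ in the frame $e_p$) cut off by a smooth function $\chi$ supported in $\Delta^n(0,r)$, the section $s_0=\chi e_p$ has $|s_0(0)|^2_{h_p}=1$ and $\|s_0\|_p^2=\beta_p^{-1}\big(1+O(\varepsilon_p^{2/3})\big)$, but it is not holomorphic. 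I would solve $\db u=\db s_0$ using Theorem \ref{T:db} (or, for sharp weighted control near $x$, the twisted Donnelly--Fefferman estimate of Theorem \ref{T:dbar} with an Agmon-type weight $v_p$), giving $\int_X|u|^2_{h_p}\,\omega^n\le a_p^{-1}\int_X|\db s_0|^2_{h_p}\,\omega^n$. Because $\db s_0$ is supported on the annulus $r/2\le|z|\le r$, where the weight is exponentially small, this norm is tiny; the section $S_0=s_0-u$ is then holomorphic and $L^2$, and---after a sub-mean value estimate bounding $|u(0)|$ by the $L^2$-norm of $u$---it still peaks at $x$. The extremal property \eqref{e:Bergvar} then yields $P_p(x)\ge|S_0(0)|^2_{h_p}/\|S_0\|_p^2\ge\beta_p(1-C\varepsilon_p^{2/3})$.

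The main obstacle is obtaining all of these estimates uniformly in both $x\in X$ and $p$ while assuming only a $\cC^3$ bound on the metrics. The delicate point is the pointwise control of the correction $u$ at the single point $x$: one must show that the $\db$-correction is negligible not merely in $L^2$ but at $x$ itself, without destroying the peak, which requires choosing the cutoff radius $r$ so as to balance the cubic weight error against the size of the $\db$-correction. It is exactly this optimization, carried out uniformly in $x$ using the uniform constants supplied by the reference cover of Lemma \ref{L:rc}, that forces the rate $\varepsilon_p^{2/3}$ in \eqref{e:Bexp00}.
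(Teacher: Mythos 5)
This theorem is not proved in the present paper: it is quoted verbatim from \cite[Theorem 1.3]{CMM}, so the comparison must be with the proof given there. Your sketch follows essentially the same route as that proof: localization at $x$ via the reference cover of Lemma \ref{L:rc}, with weight $\varphi(z)=\sum_j\lambda_j|z_j|^2+\wi\varphi(z)$, $|\wi\varphi(z)|\leq C\|h_p\|_3|z|^3$, and the identification $c_1(L_p,h_p)^n_x/\omega^n_x=\prod_j(2\lambda_j/\pi)$; an upper bound by sub-averaging against the truncated Gaussian (the same computation appears in this paper as the derivation of \eqref{e:bk1}, there with $r_p=a_p^{-1/2}$ and only a multiplicative constant $C'$ --- for the sharp form \eqref{e:Bexp00} one must let $a_pr_p^2\to\infty$ while $\|h_p\|_3r_p^3\to0$, exactly as you indicate); and a lower bound by a cut-off peak section corrected through the $L^2$-estimate of Theorem \ref{T:db}. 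The one tactical difference: in \cite{CMM} (following \cite{CM11}) the pointwise control of the correction $u$ at $x$ is obtained by the standard device of twisting the weight with a logarithmic pole of the form $2n\log(|z|/r_p)$, so that finiteness of the twisted $L^2$-norm forces $u(x)=0$; you instead bound $|u(x)|$ by sub-averaging, using that $u$ is holomorphic on $\{|z|<r_p/2\}$ where $\db(\chi e_p)=0$, and then apply \eqref{e:bk1} to $u$ itself. Your variant is sound and arguably simpler, since it avoids verifying that the twisted curvature stays bounded below by $\tfrac{a_p}{2}\omega$ across the cut-off region; on the other hand, your optional appeal to the Donnelly--Fefferman estimate of Theorem \ref{T:dbar} is unnecessary here --- that tool is needed for the off-diagonal decay of Theorem \ref{T:Bergman}, not for the diagonal asymptotics. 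The only soft spot in your sketch is that the exponent $2/3$ is asserted rather than derived: to close this one must balance the three error sources --- the volume comparison $O(r_p^2)$ from Lemma \ref{L:rc}(i), the cubic weight error $O(\|h_p\|_3r_p^3)$, and the Gaussian truncation $O(e^{-ca_pr_p^2})$ --- and use the normalization $\|h_p\|_3\geq1$ built into \eqref{bk:0.2} (whence $a_p^{-1}\leq\varepsilon_p^2$) to express all of them in powers of $\varepsilon_p$, uniformly in $x$; this is bookkeeping rather than a gap in the method, but it is precisely where the stated rate comes from.
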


\medskip

Recall that $d(x,y)$, $x,y\in X$, denotes the distance induced by the K\"ahler metric $\omega$. 

\begin{proof}[Proof of Theorem \ref{T:Bergman}]
We use ideas from the proof of \cite[Proposition 9]{Lin01} together 
with methods from \cite[Section 2]{Be03} and \cite[Theorem 1.3]{CMM}. 
Let us consider a reference cover of $X$ as in Definition \ref{D:refcov}. Let $p_0\in\N$ 
be sufficiently large such that 
\[r_p:=a_p^{-1/2}<R/2\] 
and the conclusions of Theorems \ref{T:dbar} and \ref{T:B1} hold for $p>p_0$. 
If $y\in X$ and $r>0$ we let $B(y,r):=\{\zeta\in X:\,d(y,\zeta)<r\}$ 
and we fix a constant $\tau>1$ such that, for every $y\in X$, 
$\Delta^n(y,r_p)\subset B(y,\tau r_p)$, where $\Delta^n(y,r_p)$ is the (closed) 
polydisc centered at $y$ defined using the coordinates centered at $y$ given by Lemma \ref{L:rc}. 

We show first that there exists a constant $C'>1$ with the following property: 
If  $y \in X$, so $y \in \Delta^n(x_j,R_j)$ for some $j$, and $z$ are coordinates 
centered at $y$ as in Lemma \ref{L:rc}, then 
\begin{equation}\label{e:bk1}
|S(y)|^2_{h_p}\leq C'\,\frac{c_1(L_p,h_p)_y^n}{\omega^n_y}\,
\int_{\Delta^n(y,r_p)}|S|^2_{h_p}\,\frac{\omega^n}{n!}\,,
\end{equation}
where $\Delta^n(y,r_p)$ is the (closed) polydisc centered at $y=0$ 
in the coordinates $z$ and $S$ is any continuous section of $L_p$ on $X$ 
which is holomorphic on $\Delta^n(y,r_p)$. Indeed, let 
$$\varphi_p(z) = \varphi^\prime_p(z)+ \widetilde{\varphi}_p (z)\,,
\;\;\varphi^\prime_p(z) = \sum_{l=1}^n \lambda^p_l |z_l|^2\,,$$
be a weight of $h_p$ on $\Delta^n(y,R)$ so that $\widetilde{\varphi}_p$ 
verifies $(ii)$ in Lemma \ref{L:rc} and let
$e_p$ be a frame of $L_p$ on $U_j$ with $|e_p|_{h_p} = e^{-\varphi_p}$. 
Writing $S = se_p$, where 
$s \in \mO(\Delta^n(y,r_p))$, and using the sub-averaging inequality for psh functions we get  
\[|S(y)|^2_{h_p} = |s(0)|^2 \leq \frac{\int_{\Delta^n(0,r_p)} 
|s|^2 e^{-2 \varphi^\prime_p}\,dm}{\int_{\Delta^n(0,r_p)} 
e^{-2 \varphi^\prime_p} \,dm}\,\cdot\]
If $C>1$ is the constant from Lemma \ref{L:rc} then
\begin{eqnarray*}
\int_{\Delta^n(0,r_p)}|s|^2e^{-2\varphi_p'}\,dm&
\leq&(1+Cr^2_p)\exp\!\big(2\max_{\Delta^n(0,r_p)}
\widetilde{\varphi}_p\big)\int_{\Delta^n(0,r_p)}|s|^2
e^{-2\varphi_p}\frac{\omega^n}{n!}\\ 
&\leq&(1+Cr^2_p)\exp\!\big(2C\|h_p\|_3\,r^3_p\big)
\int_{\Delta^n(0,r_p)}|S|^2_{h_p}\,\frac{\omega^n}{n!}\,.
\end{eqnarray*}
Set 
$$E(r):=\int_{|\xi|\leq r}e^{-2|\xi|^2}\,dm(\xi)
=\frac{\pi}{2}\,\left(1-e^{-2r^2}\right),$$ 
where $\,dm$ is the Lebesgue measure on $\C$. Since 
$\lambda_j^p\geq a_p$ and $E(1)>1$ we have 
\[\int_{\Delta^n(0,r_p)}e^{-2\varphi'_p}\,dm\geq 
\frac{E(r_p\sqrt{a_p}\,)^n}{\lambda_1^p\ldots\lambda_n^p}\geq
\frac{1}{\lambda_1^p\ldots\lambda_n^p}\,\cdot\]
Hence 
\[
|S(y)|_{h_p}^2\leq(1+Cr_p^2)\exp\!\big(2C\|h_p\|_3\,r_p^3\big)\,
\lambda_1^p\ldots\lambda_n^p\,\int_{\Delta^n(0,r_p)}|S|^2_{h_p}\,\frac{\omega^n}{n!}\,\cdot
\]
Note that at $y$, $\omega_y=\frac{i}{2}\sum_{j=1}^n dz_j\wedge d\bar{z}_j\,$, 
$c_1(L_p,h_p)_y=dd^c \varphi_p(0)=\frac{i}{\pi}\sum_{j=1}^n\lambda_j^p dz_j\wedge d\bar{z}_j\,$,
thus 
\[\lambda_1^p\ldots\lambda_n^p=
\left(\frac{\pi}{2}\right)^n\,\frac{c_1(L_p,h_p)_y^n}{\omega^n_y}\,\cdot\] 
Since $r_p\to0$ and, by \eqref{e:3d}, $\|h_p\|_3\,r_p^3=\varepsilon_p^3\to0$, 
there exists a constant $C'>1$ such that 
\[\left(\frac{\pi}{2}\right)^n\,(1+Cr_p^2)\exp\!\big(2C\|h_p\|_3\,r_p^3\big)\leq C'\]
for all $p\geq1$. This yields \eqref{e:bk1}.

\medskip

We continue now with the proof of the theorem. Fix $x\in X$. 
There exists a section $S_p=S_{p,x}\in H^0(X,L_p)$ such that
 \[|S_p(y)|_{h_p}^2=|P_p(x,y)|^2_{h_p}\,,\,\;\forall\,y\in X.\]
 Then 
 \[\|S_p\|^2_p=\int_X|S_p(y)|^2_{h_p}\,\frac{\omega_y^n}{n!}
 =\int_X|P_p(x,y)|^2_{h_p}\,\frac{\omega_y^n}{n!}=P_p(x).\]
 By Theorem \ref{T:B1} there exists a constant $C''>1$ such that for all $p\geq1$ and $y\in X$, 
 \begin{equation}\label{e:bk2}
 P_p(y)\leq C''\,\frac{c_1(L_p,h_p)_y^n}{\omega^n_y}\,\cdot
 \end{equation}

\medskip

Assume first that $y\in X$ and $d(x,y)\leq 4\tau r_p=4\tau a_p^{-1/2}$. 
Using \eqref{e:Bergvar} and \eqref{e:bk2} we obtain 
\begin{align*}
|P_p(x,y)|^2_{h_p}&=|S_p(y)|^2_{h_p}\leq P_p(y)\|S_p\|^2_p
=P_p(x)P_p(y)\leq(C'')^2\,\frac{c_1(L_p,h_p)_x^n}{\omega^n_x}\,
\frac{c_1(L_p,h_p)_y^n}{\omega^n_y}\\
&\leq e^{4\tau}(C'')^2\,\frac{c_1(L_p,h_p)_x^n}{\omega^n_x}\,
\frac{c_1(L_p,h_p)_y^n}{\omega^n_y}\,e^{-\sqrt{a_p}\,d(x,y)}\,.
\end{align*}

\medskip

We treat now the case when $y\in X$ and $\delta:=d(x,y)>4\tau r_p=4\tau a_p^{-1/2}$. 
By \eqref{e:bk1} and the definition of $S_p$ we have 
\begin{equation}\label{e:bk3}
|P_p(x,y)|^2_{h_p}=|S_p(y)|^2_{h_p}\leq 
C'\,\frac{c_1(L_p,h_p)_y^n}{\omega^n_y}\,
\int_{\Delta^n(y,r_p)} |P_p(x,\zeta)|^2_{h_p}\,\frac{\omega_\zeta^n}{n!}\,.
\end{equation}
Note that  
\[\Delta^n(x,r_p)\subset B(x,\delta/4)\,,\,\;\Delta^n(y,r_p)\subset 
\{\zeta\in X : d(x,\zeta)>3\delta/4\}.\] 
Let $\chi$ be a non-negative smooth function on $X$ such that  
\begin{equation}\label{e:chi}
\text{$\chi(\zeta)=1$ if $d(x,\zeta)\geq3\delta/4$, \ $\chi(\zeta)=0$ 
if $d(x,\zeta)\leq\delta/2$, \ and 
$|\overline{\partial}\chi(\zeta)|^2\leq \frac{c}{\delta^2}\,\chi(\zeta)$}
\end{equation}
for some constant $c>0$. Then we have
 \begin{align*}
\int_{\Delta^n(y,r_p)} |P_p(x,\zeta)|^2_{h_p}\,\frac{\omega_\zeta^n}{n!}
&\leq\int_X |P_p(x,\zeta)|^2_{h_p}\chi(\zeta)\,\frac{\omega_\zeta^n}{n!}\\
&= \max\left\{|P_p(\chi S)(x)|^2_{h_p}:\,S\in H^0(X,L_p),\,
\int_X|S|^2_{h_p}\chi\,\frac{\omega^n}{n!}=1\right\},
\end{align*}
where 
\[P_p(\chi S)(x)=\int_XP_p(x,\zeta)(\chi(\zeta)S(\zeta))\,\frac{\omega_\zeta^n}{n!}\] 
is the Bergman projection of the smooth section $\chi S$ to $H^0(X,L_p)$.

It remains to estimate $|P_p(\chi S)(x)|^2_{h_p}$, 
where $S\in H^0(X,L_p)$ and $\int_X|S|^2_{h_p}\chi\,\frac{\omega^n}{n!}=1$. 
To this end we consider the smooth section $u$ of $L_p$ given by
\[u:=\chi S-P_p(\chi S).\]
Note that $u$ is orthogonal to $H^0(X,L_p)$ with respect to the inner product 
$(\cdot,\cdot)_p$ induced by $h_p$ and $\omega^n/n!$. 
Moreover, since $\chi(x)=0$, and since $u$ is holomorphic in the polydisc 
$\Delta^n(x,r_p)$ centered at $x$ and defined using the coordinates 
centered at $x$ given by Lemma \ref{L:rc}, it follows by \eqref{e:bk1} that 
\begin{equation}\label{e:bk4}
|P_p(\chi S)(x)|^2_{h_p}=|u(x)|^2_{h_p}\leq C'\,
\frac{c_1(L_p,h_p)_x^n}{\omega^n_x}\,
\int_{\Delta^n(x,r_p)}|u|^2_{h_p}\,\frac{\omega^n}{n!}\,.
\end{equation}
We will estimate the latter integral using Theorem \ref{T:dbar}. 
Let $f:[0,\infty)\to(-\infty,0]$ be a smooth function such that $f(x)=0$ 
for $x\leq1/4$, $f(x)=-x$ for $x\geq1/2$, 
and set $g_\delta(x):=\delta f(x/\delta)$. There exists a constant $M>0$ 
such that $|g'_\delta(x)|\leq M$ 
and $|g''_\delta(x)|\leq M/\delta$ for all $x\geq0$. We define the function
\[v_p(\zeta):=\varepsilon\sqrt{a_p}\,g_\delta(d(x,\zeta))\,,\,\;\zeta\in X.\]
Then there exists a constant $M'>0$ such that 
\[\|\db v_p\|_{L^\infty(X)}\leq M'\varepsilon\sqrt{a_p}\;,\,\;
dd^cv_p\geq-\frac{M'\varepsilon}{\delta}\,\sqrt{a_p}\,
\omega\geq-\frac{M'\varepsilon a_p}{4\tau}\,\omega,\]
since $\delta>4\tau a_p^{-1/2}$. So $v_p$ satisfies \eqref{e:vp} 
if we take $\varepsilon=1/(8M')$. We have that $v_p=0$ in $B(x,\delta/4)\supset\Delta^n(x,r_p)$. 
Moreover  
\[\overline{\partial}u=\overline{\partial}(\chi S)=\db\chi\wedge S\]
is supported in the set $V_\delta:=\{\zeta\in X:\,\delta/2\leq d(x,\zeta)\leq3\delta/4\}$, 
and $v_p(\zeta)=-\varepsilon\sqrt{a_p}\,d(x,\zeta)\leq-\varepsilon\sqrt{a_p}\,\delta/2$ 
on this set. By Theorem \ref{T:dbar} and \eqref{e:chi} we get 
\begin{align*}
\int_{\Delta^n(x,r_p)}|u|^2_{h_p}\,
\frac{\omega^n}{n!}&\leq\int_X|u|_{h_p}^2e^{2v_p}\,\frac{\omega^n}{n!}\leq\frac{16}{a_p}\,
\int_{V_\delta}|\overline{\partial}(\chi S)|_{h_p}^2e^{2v_p}\,\frac{\omega^n}{n!}\\
&\leq\frac{16c}{a_p\delta^2}\,e^{-\varepsilon\sqrt{a_p}\,\delta}
\int_{V_\delta}|S|^2_{h_p}\chi\,\frac{\omega^n}{n!}\leq c\,e^{-\varepsilon\sqrt{a_p}\,\delta},
\end{align*}
since $a_p\delta^2>16\tau^2>16$. Hence \eqref{e:bk4} implies that 
\[|P_p(\chi S)(x)|^2_{h_p}\leq 
C'c\,\frac{c_1(L_p,h_p)_x^n}{\omega^n_x}\,e^{-\varepsilon\sqrt{a_p}\,\delta}\,.\]
It follows that 
\[\int_{\Delta^n(y,r_p)} |P_p(x,\zeta)|^2_{h_p}\,
\frac{\omega_\zeta^n}{n!}\leq C'c\,
\frac{c_1(L_p,h_p)_x^n}{\omega^n_x}\,e^{-\varepsilon\sqrt{a_p}\,d(x,y)}.\]
Combined with \eqref{e:bk3} this gives
\[|P_p(x,y)|^2_{h_p}\leq c(C')^2\,\frac{c_1(L_p,h_p)_x^n}{\omega^n_x}\,
\frac{c_1(L_p,h_p)_y^n}{\omega^n_y}\,e^{-\varepsilon\sqrt{a_p}\,d(x,y)},\]
and the proof is complete.
\end{proof}


\section{Equidistribution for zeros of random holomorphic sections}\label{S:equidist}
In Section \ref{SS:pfT1} we prove Theorem \ref{th1}.  We provide
examples of measures satisfying condition (B) and give applications
of Theorem \ref{th1} in Section \ref{SS:exB}.

\subsection{Proof of Theorem \ref{th1}}\label{SS:pfT1} 
We prove first the following general equidistribution result which combined with \cite[Theorem 1.1]{CMM} 
will yield Theorem 1.1. 

\begin{Theorem}\label{th1gen}
Let $X$ be a compact (reduced) analytic space of pure dimension $n$ and $\omega$ be a Hermitian 
form on $X$. Let $(L_p,h_p)$, $p\geq1$, be singular Hermitian holomorphic line bundles 
on $X$ and let $H^0_{(2)}(X,L_p)$ be the corresponding Bergman spaces defined in \eqref{e:bs} endowed 
with probability measures $\sigma_p$ that verify assumption (B). Let $(\mathcal{H},\sigma)$ be the product 
probability space defined in \eqref{e:calH}. Assume that there exist constants $\alpha_p>0$ such that 
\begin{equation}\label{e:Bka}
\frac{1}{\alpha_p}\,\log P_p\to 0 \,\text{ as $p\to\infty$\,, in $L^1(X,\omega^n)$,}
\end{equation}
where $P_p$ is the Bergman kernel function of $H^0_{(2)}(X,L_p)$ defined in \eqref{e:BFS1}. 
Then the following hold:

\smallskip
\noindent
(i) If $\displaystyle\lim_{p\to\infty}C_p\alpha_p^{-\nu}=0$ 
then $\displaystyle\frac{1}{\alpha_p}\big(\E[s_p=0]-c_1(L_p,h_p)\big)\to 0$\,, 
as $p\to \infty$, in the weak sense of currents on $X$. 

\smallskip
\noindent
(ii) If $\displaystyle\liminf_{p\to\infty}C_p\alpha_p^{-\nu}=0$ 
then there exists a sequence of natural numbers $p_j\nearrow\infty$ 
such that for $\sigma$-a.\,e.\ sequence $\{s_p\}\in\mathcal{H}$ we have
\[\frac{1}{\alpha_{p_j}}\log|s_{p_j}|_{h_{p_j}}\to0\,,\,\;
\frac{1}{\alpha_{p_j}}\big([s_{p_j}=0]-c_1(L_{p_j},h_{p_j})\big) \to0\,,\,
\text{ as $j\to\infty$,}\]
in $L^1(X,\omega^n)$, respectively in the weak sense of currents on $X$.

\smallskip
\noindent
(iii) If $\displaystyle\sum_{p=1}^{\infty}C_p\alpha_p^{-\nu}<\infty$ 
then for $\sigma$-a.\,e.\ sequence
$\{s_p\}\in\mathcal{H}$ we have 
\[\frac{1}{\alpha_p}\log|s_p|_{h_p}\to0\,,\,\;
\frac{1}{\alpha_p}\big([s_p=0]-c_1(L_p,h_p)\big) \to0\,,\,\text{ as $p\to\infty$,}\]
in $L^1(X,\omega^n)$, respectively in the weak sense of currents on $X$.
\end{Theorem}

\begin{proof} Note that if $H^0_{(2)}(X,L_p)\neq\{0\}$ 
then $\log P_p\in L^1(X,\omega^n)$, 
since it is locally the difference of a psh and an integrable function. 
Let $\gamma_p$ be the Fubini-Study currents of the spaces 
$H^0_{(2)}(X,L_p)$ defined in \eqref{e:BFS1}.

$(i)$ Let $\Phi$ be a smooth real valued $(n-1,n-1)$ form on $X$. 
By \eqref{e:BFS2} and 
hypothesis \eqref{e:Bka} we have 
\[\frac{1}{\alpha_p}\,\langle\gamma_p-c_1(L_p,h_p),\Phi\rangle
=\frac{1}{2\alpha_p}\,\int_X\log P_p\,dd^c\Phi\to0\,,\]
so for the first assertion of $(i)$ it suffices to show that 
\begin{equation}\label{e:T11i}
\frac{1}{\alpha_p}\,\langle \E[s_p=0]-\gamma_p,\Phi\rangle\to0\,,\,
\text{ as $p\to\infty$.}
\end{equation}
Note that there exists a constant $c>0$ such that 
for every smooth real valued $(n-1,n-1)$ form $\Phi$ on $X$, 
$$-c\|\Phi\|_{\cC^2}\,\omega^n\leq dd^c\Phi\leq c\|\Phi\|_{\cC^2}\,\omega^n.$$
Hence the total variation of $dd^c\Phi$ satisfies  
$|dd^c\Phi|\leq c\|\Phi\|_{\cC^2}\,\omega^n$.
Indeed, let $\tau:U\hookrightarrow G\subset\C^N$ be a local embedding of $X$, 
where $U\subset X$ and $G\subset\C^N$ are open, 
such that  there exist a smooth real valued $(N-1,N-1)$ form $\wi\Phi$ 
and a Hermitian form $\Omega$ on $G$ with 
$\Phi\mid_{U_{reg}}=\tau^\star\wi\Phi$ and 
$\omega\mid_{U_{reg}}=\tau^\star\Omega$. There exists a constant $c'>0$ 
such that for any smooth real valued $(N-1,N-1)$ form $\varphi$ on $G$ 
and any open set $G_0\Subset G$, we have 
\[-c'\norm\varphi_{\cC^2(G_0)}\Omega^n\leq dd^c\varphi\mid_{G_0} 
\leq c' \norm\varphi_{\cC^2(G_0)}\Omega^n.\]
Our claim follows by taking a finite cover of $X$ 
with sets of form $U_0=\tau^{-1}(G_0)$. 

If $s_p\in H^0_{(2)}(X,L_p)$, using \eqref{e:LP} and \eqref{e:BFS2}, we see that
\begin{equation}\label{eq1}
\big\langle [s_p=0],\Phi\big\rangle = \big\langle c_1(L_p,h_p),\Phi\big\rangle
+ \int_X\log |s_p|_{h_p}dd^c\Phi
= \big\langle \gamma_p,\Phi\big\rangle 
+\int_X\log\frac{|s_p|_{h_p}}{\sqrt{P_p}}\,dd^c\Phi.
\end{equation}
Note that $\log\frac{|s_p|_{h_p}}{\sqrt{P_p}}\in L^1(X,\omega^n)$ 
as it is locally the difference of two psh functions.

We write
$$s_p=\sum_{j=1}^{d_p}a_jS_j^p.$$ 
Moreover, for $x\in X$ we let $e_p$ be a holomorphic frame of $L_p$ 
on a neighborhood $U$ of $x$ and we write
$S_j^p=s_j^pe_p$, where $s_j^p\in\cO_X(U)$. 
Let $\langle a, u^p\rangle =a_1u_1+\ldots +a_{d_p}u_{d_p}$, where 
\begin{equation}\label{e:up}
u^p(x):=(u_1(x),\dots,u_{d_p}(x))\,,\,\; u_j(x)
=\frac{s_j^p(x)}{\sqrt{|s_1^p(x)|^2+\ldots+|s_{d_p}^p(x)|^2}}\,\cdot
\end{equation}
Using H\"older's inequality and assumption (B) it follows that
$$\int_{\hp}\Big|\log\frac{|s_p(x)|_{h_p}}{\sqrt{P_p(x)}}\Big| d\sigma_p(s_p)
=\int_{\Cdp}\big|\log|\langle a ,u^p(x)\rangle|\big| d\sigma_p(a)\leq C_p^{1/\nu}.$$
Hence by Tonelli's theorem 
\[\int_{H^0_{(2)}(X,L_p)}\int_X\Big|\log\frac{|s_p|_{h_p}}{\sqrt{P_p}}\Big|
|dd^c\Phi|\,d\sigma_p(s_p)
\leq C_p^{1/\nu}\int_X|dd^c\Phi|\leq c\,C_p^{1/\nu}\|\Phi\|_{\cC^2}\int_X\omega^n.\]
By (\ref{eq1}) we conclude that 
$$\big\langle \E[s_p=0],\Phi\big\rangle=
\int_{\hp}\big\langle [s_p=0],\Phi\big\rangle\, d\sigma_p(s_p)$$ 
is a well-defined positive closed current which satisfies
$$ \big|\big\langle \E[s_p=0]-\gamma_p,\Phi\big\rangle\big|
\leq c\,C_p^{1/\nu}\|\Phi\|_{\cC^2}\int_X\omega^n.$$
Thus \eqref{e:T11i} holds since $C_p^{1/\nu}/\alpha_p\to0$.

\medskip

For the proof of assertion $(ii)$, since $\liminf_{p\to\infty}C_p\alpha_p^{-\nu}=0$ 
we can find a sequence of natural numbers  $p_j\nearrow\infty$ such that 
$\sum_{j=1}^\infty C_{p_j}\alpha_{p_j}^{-\nu}<\infty$. 
Then we proceed as in the proof of assertion $(iii)$ given below, 
working with $\{p_j\}$ instead of $\{p\}$.

\medskip

$(iii)$ We define 
\[Y_p,\,Z_p:\mathcal H\to[0,\infty)\,,\,\;Y_p(s)
=\frac{1}{\alpha_p}\int_X\big|\log|s_p|_{h_p}\big|\,\omega^n\,,\,\;
Z_p(s)=\frac{1}{\alpha_p}\int_X\Big|\log\frac{|s_p|_{h_p}}{\sqrt{P_p}}\Big|\,\omega^n\,,\]
where $s=\{s_p\}$. So 
\[0\leq Y_p(s)\leq Z_p(s)+m_p\,,\,\text{ where } m_p:=\frac{1}{2\alpha_p}\int_X|\log P_p|\,\omega^n.\]
Hypothesis \eqref{e:Bka} shows that $m_p\to0$ as $p\to\infty$. By H\"older's inequality
\[0\leq Z_p(s)^\nu\leq\frac{1}{\alpha_p^\nu}
\left(\int_X\omega^n\right)^{\nu-1}\int_X\Big|\log\frac{|s_p|_{h_p}}{\sqrt{P_p}}\Big|^\nu\omega^n.\]
For $x\in X$ and $u^p(x)$ as in \eqref{e:up} we obtain using (B) that
\[\int_{\hp}\Big|\log\frac{|s_p(x)|_{h_p}}
{\sqrt{P_p(x)}}\Big|^\nu d\sigma_p(s_p)=\int_{\C^{d_p}}
\big|\log|\langle a ,u^p(x)\rangle|\big|^\nu d\sigma_p(a)\leq C_p.\]
Hence by Tonelli's theorem 
\[\int_{\mathcal H} Z_p(s)^\nu d\sigma(s)\leq\frac{1}{\alpha_p^\nu}
\left(\int_X\omega^n\right)^{\nu-1}\int_X\int_{H^0_{(2)}(X,L_p)}
\Big|\log\frac{|s_p|_{h_p}}{\sqrt{P_p}}\Big|^\nu d\sigma_p(s_p)\;
\omega^n\leq\frac{C_p}{\alpha_p^\nu}\left(\int_X\omega^n\right)^\nu.\]
Therefore
\[\sum_{p=1}^\infty\int_{\mathcal H} Z_p(s)^\nu d\sigma(s)
\leq\left(\int_X\omega^n\right)^\nu\sum_{p=1}^\infty
\frac{C_p}{\alpha_p^\nu}<\infty\,.\]
It follows that $Z_p(s)\to0$, and hence $Y_p(s)\to0$ 
as $p\to\infty$ for $\sigma$-a.\,e.\ $s\in\mathcal{H}$. 
This means that $\frac{1}{\alpha_p}\log|s_p|_{h_p}\to0$ in $L^1(X,\omega^n)$, 
hence by \eqref{e:LP}, 
$\frac{1}{\alpha_p}\big([s_p=0]-c_1(L_p,h_p)\big) \to0$ weakly on $X$,  
for $\sigma$-a.\,e.\ sequence
$\{s_p\}\in\mathcal{H}$. The proof of Theorem \ref{th1gen} is finished.
\end{proof}

\medskip
\noindent
{\bf Proof of Theorem \ref{th1}}. By \cite[Theorem 1.1]{CMM} we have that 
\[\frac{1}{A_p}\,\log P_p\to 0 \,\text{ as $p\to\infty$\,, in $L^1(X,\omega^n)$.}\]
Hence Theorem \ref{th1} follows at once from 
Theorem \ref{th1gen} with $\alpha_p:=A_p$.  \hfill $\Box$

\medskip

Let us give now a variation of Theorem \ref{th1} modeled
on \cite[Corollary 5.6]{CMM}.
It allows to approximate arbitrary $\omega$-psh functions
by logarithms of absolute values
of holomorphic sections.
Let $(X,\omega)$ be a K\"ahler manifold with 
a positive line bundle $(L,h_0)$, where $h_0$ is 
a smooth Hermitian metric such that 
$c_1(L,h_0)=\omega$. The set of singular Hermitian metrics 
$h$ on $L$ with $c_1(L,h)\geq0$ is in one-to-one correspondence 
to the set $PSH(X,\omega)$ of $\omega$-plurisubharmonic 
($\omega$-psh) functions on $X$, by associating to 
$\psi\in PSH(X,\omega)$ the metric 
$h_\psi=h_0e^{-2\psi}$ (see e.g., \cite{D90,GZ05}). 
Note that 
$c_1(L,h_\psi)=\omega+dd^c\psi$.
\begin{Corollary}\label{C:Lp3}
Let $(X,\omega)$ be a compact K\"ahler manifold and $(L,h_0)$ 
be a positive line bundle on $X$ such that $c_1(L,h_0)=\omega$. 
Let $h$ be a singular Hermitian
metric on $L$ with $c_1(L,h)\geq0$ and let $\psi\in PSH(X,\omega)$
be its global weight such that $h=h_0e^{-2\psi}$.
Let $\{n_p\}_{p\geq1}$ be a sequence of natural numbers such 
that 
\begin{equation}\label{n_p}
n_p\to\infty\:\:\text{and $n_p/p\to0$ as $p\to\infty$}.
\end{equation} 
Let  $h_p$ be the metric on $L^p$ given by
\begin{equation}\label{h_p}
h_p=h^{p-n_p}\otimes h_0^{n_p}=h_0^p\,e^{-2(p-n_p)\psi}.
\end{equation}
For $p\geq1$ let $\sigma_p$ be probability measures on 
$\hp=H^0_{(2)}(X,L^p,h_p)$
satisfying condition (B).
Then the following hold:

\smallskip
\noindent
(i) If $\displaystyle\lim_{p\to\infty}C_p\,p^{-\nu}=0$ 
then $\displaystyle\frac{1}{p}\E[s_p=0]\to c_1(L,h)$\,, 
as $p\to \infty$, weakly on $X$. 

\smallskip
\noindent
(ii) If $\displaystyle\liminf_{p\to\infty}C_p\,p^{-\nu}=0$ 
then there exists a sequence of natural numbers $p_j\nearrow\infty$ 
such that for $\sigma$-a.\,e.\ sequence $\{s_p\}\in\mathcal{H}$ we have
as $j\to\infty$,
\[\frac{1}{p_j}\log|s_{p_j}|_{h_0^{p_j}}\to\psi\;\;
\text{in $L^1(X,\omega^n)$}\,,\quad
\frac{1}{p_j}[s_{p_j}=0]\to c_1(L,h)\,,\,
\text{weakly on $X$.}\]

\smallskip
\noindent
(iii) If $\displaystyle\sum_{p=1}^{\infty}C_p\,p^{-\nu}<\infty$ 
then for $\sigma$-a.\,e.\ sequence
$\{s_p\}\in\mathcal{H}$ we have as $p\to\infty$,
\[\frac{1}{p}\log|s_p|_{h_0^p}\to\psi\;\;
\text{in $L^1(X,\omega^n)$}\,,\quad
\frac{1}{p}[s_p=0] \to c_1(L,h)\,,\,\text{weakly on $X$.}\]
\end{Corollary}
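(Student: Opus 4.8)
The plan is to apply Theorem~\ref{th1} to the sequence of line bundles $(L_p,h_p)=(L^p,h_p)$ with $h_p$ given by \eqref{h_p}, and then rescale the resulting conclusions from the normalization $1/A_p$ to $1/p$. First I would verify the hypotheses (A1), (A2), (B). Assumption (A1) is immediate since $X$ is a compact K\"ahler manifold, so $X_{\rm sing}=\emptyset$ and \eqref{e:domin0} is vacuous. For (A2), since $h=h_0e^{-2\psi}$ we have
\[
c_1(L^p,h_p)=(p-n_p)\,c_1(L,h)+n_p\,\omega\geq n_p\,\omega,
\]
using $c_1(L,h)=\omega+dd^c\psi\geq0$; thus \eqref{e:pc} holds with $a_p=n_p$, and $a_p\to\infty$ by \eqref{n_p}. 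Condition (B) is assumed. Since $c_1(L,h)$ lies in the cohomology class $c_1(L)=[\omega]$, we get $\int_Xc_1(L,h)\wedge\omega^{n-1}=\int_X\omega^n$, hence $A_p=(p-n_p)\int_X\omega^n+n_p\int_X\omega^n=p\int_X\omega^n$.

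Because $A_p=p\int_X\omega^n$, each hypothesis of Theorem~\ref{th1} is equivalent to the one stated here: from $C_pA_p^{-\nu}=C_pp^{-\nu}(\int_X\omega^n)^{-\nu}$ we see that $\lim C_pp^{-\nu}=0$, $\liminf C_pp^{-\nu}=0$, and $\sum C_pp^{-\nu}<\infty$ match parts (i), (ii), (iii) respectively. For the zero-current statements I would convert the normalization using that $A_p/p=\int_X\omega^n$ is constant and that
\[
\tfrac1p\,c_1(L^p,h_p)=\tfrac{p-n_p}{p}\,c_1(L,h)+\tfrac{n_p}{p}\,\omega\to c_1(L,h)
\]
weakly, since $n_p/p\to0$. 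In case (i), multiplying the conclusion $\frac1{A_p}(\E[s_p=0]-c_1(L^p,h_p))\to0$ by $A_p/p$ gives $\frac1p(\E[s_p=0]-c_1(L^p,h_p))\to0$, whence $\frac1p\E[s_p=0]\to c_1(L,h)$. Cases (ii) and (iii) follow identically, with $[s_p=0]$ in place of $\E[s_p=0]$, holding for $\sigma$-a.e.\ sequence (along a subsequence $p_j$ in case (ii)).

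The remaining, and most characteristic, point is the passage from the norm $|\cdot|_{h_p}$ to $|\cdot|_{h_0^p}$. Writing out \eqref{h_p} with $h=h_0e^{-2\psi}$ gives $h_p=h_0^p\,e^{-2(p-n_p)\psi}$, so that for any section $s_p$
\[
\tfrac1p\log|s_p|_{h_0^p}=\tfrac1p\log|s_p|_{h_p}+\tfrac{p-n_p}{p}\,\psi.
\]
By Theorem~\ref{th1}, after the rescaling above, the first term tends to $0$ in $L^1(X,\omega^n)$, while the second tends to $\psi$ in $L^1(X,\omega^n)$ because $(p-n_p)/p\to1$ and $\psi\in L^1(X,\omega^n)$ (being $\omega$-psh). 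This yields $\frac1p\log|s_p|_{h_0^p}\to\psi$. I expect the main subtlety to be the interplay of the two conditions in \eqref{n_p}: the requirement $n_p\to\infty$ is exactly what supplies the growing lower bound $a_p=n_p$ needed for \eqref{e:pc} (the metric $h$ alone is only semipositive), whereas $n_p/p\to0$ is what forces the correction term $\frac{p-n_p}{p}\psi$ and the normalized curvature $\frac1pc_1(L^p,h_p)$ to converge to $\psi$ and $c_1(L,h)$ respectively, so that the limits are clean rather than a mixture of $h$ and $h_0$.
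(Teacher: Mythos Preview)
Your proof is correct and follows essentially the same route as the paper, which simply cites Theorem~\ref{th1} together with the arguments of \cite[Corollaries 5.2 and 5.6]{CMM}; you have written out those details explicitly. The key computations---$c_1(L^p,h_p)\geq n_p\omega$, $A_p=p\int_X\omega^n$, $\frac1p c_1(L^p,h_p)\to c_1(L,h)$, and the identity $\frac1p\log|s_p|_{h_0^p}=\frac1p\log|s_p|_{h_p}+\frac{p-n_p}{p}\psi$---are exactly what underlies the referenced corollaries.
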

\begin{proof}
Note that $\log|s_p|_{h_p}=\log|s_p|_{h_0^p}-(p-n_p)\psi$. 
The corollary follows from Theorem \ref{th1} and the proofs of 
Corollaries 5.2 and 5.6 from \cite{CMM}.
\end{proof}
Corollary \ref{C:Lp3} is an extension of 
\cite[Theorem 5.2]{BL13} which deals with the special case 
when $\psi={\mathcal V}_{K,q}^*$ is the weighted 
$\omega$-psh global extremal function of a compact $K\subset X$.
Note that we use here a different scalar product than in \cite{BL13}.
\smallskip

\begin{Remark}
Let us give a local version of Theorem \ref{th1}. 
Note that when $X$ is smooth any holomorphic line bundle on $X$
is trivial on any contractible Stein open subset $U\subset X$.
Assume that $(X,\omega)$, $(L_p,h_p)$ and $\sigma_p$ 
verify the assumptions (A1), (A2) and (B).
Let $U\subset X$ such that for every $p\geq1$, $L_p|_U$ is trivial and
let $e_p:U\to L_p$ be a holomorphic frame with $|e_p|_{h_p}=e^{-\varphi_p}$,
where $\varphi_p\in PSH(U)$. For a section $s\in H^0(X,L_p)$
write $s=\widetilde{s}\,e_p$\,, with $\widetilde{s}\in\mO(U)$. 
If $\sum_{p=1}^{\infty}C_pA_p^{-\nu}<\infty$,
then for $\sigma$-a.\,e.\ sequence
$\{s_p\}\in\mathcal{H}$ we have as $p\to\infty$,
\[\frac{1}{A_p}\Big(\log|\widetilde{s}_p|-\varphi_p\Big)\to0\:\:
\text{in $L^1(U,\omega^n)$}\,,\quad
\frac{1}{A_p}\Big([\,\widetilde{s}_p=0]-dd^c\varphi_p\Big)\to0\,,\:\:
\text{weakly on $U$.}
\]
In particular, let $(L_p,h_p)=(L^p,h^p)$, where $(L,h)$ is a fixed singular 
Hermitian holomorphic line bundle on $X$ such that 
$c_1(L,h)\geq\varepsilon\omega$ for some $\varepsilon>0$.
Let $U\subset X$ such that $L|_U$ is trivial, let $e:U\to L$
be a holomorphic frame with $|e|_{h}=e^{-\varphi}$,
where $\varphi\in PSH(U)$. Consider the holomorphic
frames $e_p=e^{\otimes p}$ of $L^p|_U$. 
If $\sum_{p=1}^{\infty}C_p\,p^{-\nu}<\infty$,
then for $\sigma$-a.\,e.\ sequence
$\{s_p\}\in\mathcal{H}$ we have as $p\to\infty$,
\[\frac{1}{p}\log|\widetilde{s}_p|\to\varphi\:\:
\text{in $L^1_{loc}(U)$}\,,\quad
\frac{1}{p}[\,\widetilde{s}_p=0]\to dd^c\varphi\,,\:\:
\text{weakly on $U$.}\]
\end{Remark}
\begin{Example}
We formulate now some of the previous results in the case 
of polynomials in $\C^n$. Consider $X={\mathbb P}^n$ and $L_p=\mO(p)$, $p\geq1$, 
where $\mathcal O(1)\to{\mathbb P}^n$ is the hyperplane line bundle.
Let ${\mathbb C}^n\hookrightarrow{\mathbb P}^n$, $\zeta\mapsto[1:\zeta]$, 
be the standard embedding.
The global holomorphic sections $H^0(\mathbb{P}^n,\mO(p))$ 
of $\mO(p)$ are given by homogeneous polynomials 
of degree $p$ in the homogeneous coordinates $z_0,\ldots,z_n$
on $\C^{n+1}$. For any $\alpha\in\N^{n+1}$ the map
$\C^{n+1}\ni z\mapsto z^\alpha$ is identified to a section 
$s_\alpha\in H^0(\mathbb{P}^n,\mO(p))$. 

On $U_0=\{[1:\zeta]\in\mathbb{P}^n:\zeta\in\C^n\}\cong\C^n$
we consider the holomorphic frame $e_p=s_{(p,0,\ldots,0)}$ of 
$\mO(p)$, corresponding to $z_0^p$.
The trivialization of $\mO(p)$ using this frame gives 
an identification
\begin{equation}\label{e:hip}
H^0(\mathbb{P}^n,\mO(p))\to\C_p[\zeta]\,,\:\:s\mapsto s/z_0^p,
\end{equation}
with the space of polynomials of total degree at most $p$,
\[
\C_p[\zeta]=\C_p[\zeta_1,\ldots,\zeta_n]:=
\left\{f\in\C[\zeta_1,\ldots,\zeta_n]:\deg(f)\leq p\right\}.
\]
Let $\omega_{\FS}$ denote the Fubini-Study 
K\"ahler form on $\mathbb{P}^n$ and $h_{\FS}$ be the Fubini-Study metric on $\mO(1)$,
so $c_1(\mO(1),h_{\FS})=\omega_{\FS}$\,. 
The set $PSH(\mathbb{P}^n,p\,\omega_{\FS})$
is in one-to-one correspondence 
to the set $p\mathcal{L}({\mathbb C}^n)$, where $\mathcal{L}({\mathbb C}^n)$ is the
Lelong class of entire psh functions with 
logarithmic  growth (cf.\ \cite[Section 2]{GZ05}): 
\[
\mathcal{L}({\mathbb C}^n)=\left\{\varphi\in PSH(\C^n):\,\exists\, C_\varphi\in\R \text{ such that
$\varphi(z)\leq \log^+\|z\|+C_\varphi$ on $\C^n$}\right\}.
\]
The map $\mathcal L({\mathbb C}^n)
\to PSH({\mathbb P}^n,\omega_{\FS})$
is given by $\varphi\mapsto\wi\varphi$ where 
\[
\wi\varphi(w)=\begin{cases}
\varphi(w)-\frac12\log(1+|w|^2)\,,\quad &w\in\C^n,\\
\limsup\limits_{z\to w,z\in\C^n}\wi\varphi(z)\,,\quad
&w\in\mathbb{P}^n\setminus\C^n.
\end{cases}
\]
The one-to-one correspondence between singular Hermitian
metrics $h_p$ on $\mO(p)$ with $c_1(\mO(p),h_p)\geq0$ and $p\mathcal{L}({\mathbb C}^n)$
is given by sending a metric $h_p$ to its weight $\varphi_p$ on $U_0$ with 
respect to the standard frame $e_p$. Define the $L^2$-space
\[
H^0_{(2)}(\mathbb{P}^n,\mO(p),h_p)=
\left\{s\in H^0(\mathbb{P}^n,\mO(p)):\int_{\mathbb{P}^n}|s|^2_{h_p}
\frac{\omega^n_{\FS}}{n!}<\infty\right\},
\]
with the obvious scalar product. The map \eqref{e:hip} induces an isometry
between this space and the $L^2$-space of polynomials
\begin{equation}\label{e:cp2}
\C_{p,(2)}[\zeta]=
\left\{f\in\C_p[\zeta] :\int_{\C^n}|f|^2e^{-2\varphi_p}
\frac{\omega^n_{\FS}}{n!}<\infty\right\}.
\end{equation}
If $\sigma_p$ are probability measures on $\C_{p,(2)}[\zeta]$
we denote by $\mathcal{H}$ the corresponding product probability space
\((\mathcal{H},\sigma)=
\left(\prod_{p=1}^\infty \C_{p,(2)}[\zeta],\prod_{p=1}^\infty\sigma_p\right) 
\).
\begin{Corollary}\label{C:pol2}
Consider a sequence of functions $\varphi_p\in p\mathcal{L}(\C^n)$
such that $dd^c\varphi_p\geq a_p\,\omega_{\FS}$ on $\C^n$, where
$a_p>0$ and $a_p\to\infty$ as $p\to\infty$. For $p\geq1$ let $\sigma_p$ be probability 
measures on $\C_{p,(2)}[\zeta]$ satisfying condition (B). Assume that $\sum_{p=1}^{\infty}C_pp^{-\nu}<\infty$\,.
Then for $\sigma$-a.\,e.\ sequence
$\{f_p\}\in\mathcal{H}$ we have as $p\to\infty$,
\begin{align*}
&\frac{1}{p}\Big(\log|f_p|-\varphi_p\Big)\to0\:\:
\text{in $L^1(\C^n,\omega_{\FS}^n)\,,\,$ hence in $L^1_{loc}(\C^n)$}\,,\\
&\frac{1}{p}\Big([f_p=0]-dd^c\varphi_p\Big)\to0\,,\:\:\text{weakly on $\C^n$}\,.
\end{align*}
\end{Corollary}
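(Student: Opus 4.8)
The plan is to transport the statement to $\mathbb{P}^n$ via the correspondence set up before \eqref{e:hip}, and then to re-run the proof of Theorem~\ref{th1}(iii) with the \emph{local} normalizing constant $A_p$. First I would let $h_p$ be the singular Hermitian metric on $\mO(p)$ whose weight on $U_0\cong\C^n$, with respect to the frame $e_p$ corresponding to $z_0^p$, is $\varphi_p$. The local potential $\varphi_p-a_p\phi_{\FS}$, with $\phi_{\FS}=\tfrac12\log(1+|\zeta|^2)$ a local weight of $h_{\FS}$ (so $dd^c\phi_{\FS}=\omega_{\FS}$), is psh on $\C^n$ since $dd^c\varphi_p-a_p\omega_{\FS}\ge0$, and it extends plurisubharmonically across the hyperplane at infinity because $\varphi_p\in p\mathcal{L}(\C^n)$; hence $c_1(\mO(p),h_p)\ge a_p\omega_{\FS}$ on all of $\mathbb{P}^n$, so (A2) holds with these $a_p$, while (A1) is trivial since $\mathbb{P}^n$ is a smooth compact K\"ahler manifold ($X_{\rm sing}=\emptyset$, so \eqref{e:domin0} is vacuous). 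By \eqref{e:hip} and \eqref{e:cp2} the map $s_p\mapsto f_p=s_p/z_0^p$ is a unitary isomorphism $H^0_{(2)}(\mathbb{P}^n,\mO(p),h_p)\to\C_{p,(2)}[\zeta]$, so condition (B) for $\sigma_p$ is the same on both sides, and on $\C^n$ one has $|s_p|_{h_p}=|f_p|e^{-\varphi_p}$, whence $\log|s_p|_{h_p}=\log|f_p|-\varphi_p$, $[s_p=0]=[f_p=0]$ and $c_1(\mO(p),h_p)=dd^c\varphi_p$ there.

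The one point where the corollary is not a verbatim specialization of Theorem~\ref{th1} (or of the local version of Theorem~\ref{th1} recorded above) is the normalization: the constant $A_p=\int_{\C^n}dd^c\varphi_p\wedge\omega_{\FS}^{n-1}$ is the mass carried by $\C^n$, which in general is strictly smaller than the global mass $\int_{\mathbb{P}^n}c_1(\mO(p),h_p)\wedge\omega_{\FS}^{n-1}=p\int_{\mathbb{P}^n}\omega_{\FS}^n$, the difference being the mass placed on the hyperplane at infinity. Consequently the hypothesis $\sum_p C_pA_p^{-\nu}<\infty$ is stronger than the one obtained with the global constant, and I would prove the sharper conclusion by following the proof of Theorem~\ref{th1}(iii) with $Y_p,Z_p,m_p$ as there but with $A_p$ the local constant. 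The probabilistic half carries over unchanged: since $\omega_{\FS}^n$ gives no mass to $\mathbb{P}^n\setminus\C^n$, every integral over $X$ equals the corresponding integral over $\C^n$; the pointwise identity $|s_p(x)|_{h_p}/\sqrt{P_p(x)}=|\langle a,u^p(x)\rangle|$ on $\C^n$, together with condition (B) and Tonelli's theorem, gives $\int_{\mathcal H}Z_p(s)^\nu\,d\sigma(s)\le A_p^{-\nu}\big(\int_{\C^n}\omega_{\FS}^n\big)^\nu C_p$, so $\sum_p\int_{\mathcal H}Z_p^\nu\,d\sigma<\infty$ and $Z_p\to0$ for $\sigma$-a.e.\ sequence by Borel--Cantelli.

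The main obstacle is the remaining deterministic ingredient: one must still show $m_p:=\frac{1}{2A_p}\int_{\mathbb{P}^n}|\log P_p|\,\omega_{\FS}^n\to0$ with the local constant $A_p$, so that $Y_p\le Z_p+m_p\to0$. In Theorem~\ref{th1}(iii) this is \cite[Theorem 1.1]{CMM} \emph{with the global normalization}; the point here is to obtain it with the smaller $A_p$. I would first note that $A_p\ge a_p\int_{\C^n}\omega_{\FS}^n\to\infty$, and then invoke the Bergman-kernel estimate of \cite[Theorem 1.1]{CMM}, whose proof controls $\int_{\mathbb{P}^n}|\log P_p|\,\omega_{\FS}^n$ by $\int_{\mathbb{P}^n}|\log(c_1(\mO(p),h_p)^n/\omega_{\FS}^n)|\,\omega_{\FS}^n$; on $\C^n$ the relevant density is $(dd^c\varphi_p)^n/\omega_{\FS}^n\ge a_p^n$, and the resulting integral is $o(A_p)$, being governed by $a_p$ rather than by the global degree $p$ (when $h_p$ is smooth this is the content of the expansion in Theorem~\ref{T:B1}). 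Granting this local Bergman estimate, $\frac1{A_p}\big(\log|f_p|-\varphi_p\big)\to0$ in $L^1(\C^n,\omega_{\FS}^n)$ for $\sigma$-a.e.\ $\{f_p\}$, hence in $L^1_{loc}(\C^n)$ because $\omega_{\FS}^n$ dominates a positive multiple of Lebesgue measure on every compact subset of $\C^n$; applying $dd^c$ and the Poincar\'e--Lelong formula \eqref{e:LP} then yields $\frac1{A_p}\big([f_p=0]-dd^c\varphi_p\big)\to0$ weakly on $\C^n$. The crux is thus the local-normalization Bergman estimate, everything else being a transcription of the proof of Theorem~\ref{th1}.
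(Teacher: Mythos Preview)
Your reduction to $\mathbb{P}^n$ and the verification of (A1)--(A2) is the paper's argument; where you extend $\varphi_p-a_p\phi_{\FS}$ plurisubharmonically across $H_\infty$, the paper instead applies Siu's decomposition $c_1(\mO(p),h_p)=T+b\,[z_0=0]$ with $T$ the trivial extension of $dd^c\varphi_p$, whence $c_1(\mO(p),h_p)\ge T\ge a_p\,\omega_{\FS}$ on all of $\mathbb{P}^n$. After that the paper just writes ``the corollary now follows directly from Theorem~\ref{th1}'' and stops---no separate treatment of the normalizing constant.

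You are correct that the $A_p$ defined in the corollary (mass over $\C^n$) can be strictly smaller than the global mass $\int_{\mathbb{P}^n}c_1(\mO(p),h_p)\wedge\omega_{\FS}^{n-1}$ that Theorem~\ref{th1} uses, so a black-box citation literally yields the conclusion only with the larger constant; the paper's one-line proof elides this point. But your proposed fix has a gap at precisely the place you flag as the ``crux'': you reduce everything to $m_p=\frac{1}{2A_p}\int_{\mathbb{P}^n}|\log P_p|\,\omega_{\FS}^n\to0$ with the \emph{local} $A_p$, and then do not prove it. Your sketch invokes Theorem~\ref{T:B1}, but that result requires $\cC^3$ metrics while $h_p$ here is singular (its curvature may charge $H_\infty$); and the claimed control of $\int|\log P_p|$ by $\int|\log(c_1(\mO(p),h_p)^n/\omega_{\FS}^n)|$, together with the assertion that the latter is $o(A_p^{\rm loc})$, is not justified---you only observe a lower bound $a_p^n$ on the density over $\C^n$, which does not bound the integral of $|\log|$ of the density from above. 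So your proposal isolates a genuine subtlety the paper glosses over, but the argument you offer stops exactly where the difficulty sits rather than resolving it.
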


\begin{proof} If $h_p$ is the singular Hermitian metric on $\mO(p)$
corresponding to $\varphi_p$ then 
\[A_p=\int_{{\mathbb P}^n}c_1(\mO(p),h_p)\wedge\omega_{\FS}^{n-1}=p,\text{ and }c_1(\mO(p),h_p)\mid_{\C^n}=dd^c\varphi_p\geq a_p\,\omega_{\FS}.\]
If $T$ denotes the trivial extension of $dd^c\varphi_p$ to ${\mathbb P}^n$ 
then $T\geq a_p\,\omega_{\FS}$ on ${\mathbb P}^n$. 
By Siu's decomposition theorem, $c_1(\mO(p),h_p)=T+b[z_0=0]$, 
where $b\geq0$. 
Hence $c_1(\mO(p),h_p)\geq T\geq a_p\,\omega_{\FS}$ on ${\mathbb P}^n$. 
The corollary now follows directly from Theorem \ref{th1}.
\end{proof}
In particular, we obtain:
\begin{Corollary}\label{C:pol3}
Let $\varphi\in \mathcal{L}(\C^n)$
such that $dd^c\varphi\geq \varepsilon\,\omega_{\FS}$ on $\C^n$
for some constant $\varepsilon>0$. 
For $p\geq1$ construct the spaces $\C_{p,(2)}[\zeta]$ by setting
$\varphi_p=p\varphi$ in \eqref{e:cp2} and let 
$\sigma_p$ be probability measures on $\C_{p,(2)}[\zeta]$
satisfying condition (B). If $\sum_{p=1}^{\infty}C_p\,p^{-\nu}<\infty$,
then for $\sigma$-a.\,e.\ sequence
$\{f_p\}\in\mathcal{H}$ we have as $p\to\infty$,
\begin{equation}\label{e:pp}
\frac{1}{p}\log|f_p|\to\varphi\:\:
\text{in $L^1(\C^n,\omega_{\FS}^n)$\,,}\quad 
\frac{1}{p}[f_p=0]\to dd^c\varphi\,,\:\:\text{weakly on $\C^n$\,.}
\end{equation}
\end{Corollary}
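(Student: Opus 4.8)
The plan is to obtain Corollary \ref{C:pol3} as the special case of Corollary \ref{C:pol2} corresponding to the arithmetic sequence of weights $\varphi_p=p\varphi$. First I would check that this choice meets the hypotheses of Corollary \ref{C:pol2}. Since $\varphi\in\mathcal{L}(\C^n)$, by definition $\varphi_p=p\varphi\in p\mathcal{L}(\C^n)$, and the curvature lower bound scales linearly:
\[
dd^c\varphi_p=p\,dd^c\varphi\geq p\varepsilon\,\omega_{\FS}\quad\text{on }\C^n ,
\]
so we may take $a_p=p\varepsilon$, which tends to $\infty$. The measures $\sigma_p$ on $\C_{p,(2)}[\zeta]$ satisfy condition (B) by assumption, so only the summability hypothesis and the matching of the two normalizing constants remain to be settled.

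The crux is to compare $A_p$, the normalization appearing in Corollary \ref{C:pol2}, with the normalization $p$ occurring in \eqref{e:pp}. By definition,
\[
A_p=\int_{\C^n}dd^c\varphi_p\wedge\omega_{\FS}^{n-1}=p\,m\,,\qquad m:=\int_{\C^n}dd^c\varphi\wedge\omega_{\FS}^{n-1}\,,
\]
and I would argue that $m$ is a constant, independent of $p$, with $0<m<\infty$. The lower bound $m\geq\varepsilon\int_{\C^n}\omega_{\FS}^n>0$ is immediate from $dd^c\varphi\geq\varepsilon\,\omega_{\FS}$ (the hyperplane at infinity being $\omega_{\FS}^n$-null). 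For finiteness I would pass to $\mathbb{P}^n$: associating to $\varphi$ its extension $\wi\varphi\in PSH(\mathbb{P}^n,\omega_{\FS})$, one has $dd^c\varphi=\omega_{\FS}+dd^c\wi\varphi$ on $\C^n$, and the global positive closed current $\omega_{\FS}+dd^c\wi\varphi$ on $\mathbb{P}^n$ has total mass $\int_{\mathbb{P}^n}(\omega_{\FS}+dd^c\wi\varphi)\wedge\omega_{\FS}^{n-1}=\int_{\mathbb{P}^n}\omega_{\FS}^n<\infty$ (the $dd^c\wi\varphi$ term integrating to zero by Stokes against the smooth closed form $\omega_{\FS}^{n-1}$). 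Restricting to the open set $\C^n$ gives $m\leq\int_{\mathbb{P}^n}\omega_{\FS}^n<\infty$. Since $m$ does not depend on $p$, the summability hypotheses coincide: $\sum_p C_pA_p^{-\nu}=m^{-\nu}\sum_p C_p\,p^{-\nu}$, which is finite precisely when $\sum_p C_p\,p^{-\nu}<\infty$.

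With all hypotheses verified, Corollary \ref{C:pol2} gives, for $\sigma$-a.e.\ sequence $\{f_p\}\in\mathcal{H}$,
\[
\frac{1}{A_p}\big(\log|f_p|-p\varphi\big)\to0\ \text{ in }L^1(\C^n,\omega_{\FS}^n)\,,\qquad
\frac{1}{A_p}\big([f_p=0]-p\,dd^c\varphi\big)\to0\ \text{ weakly on }\C^n.
\]
The final step is a harmless rescaling: multiplying by the fixed constant $m=A_p/p$ preserves convergence to zero and converts the left-hand sides into $\tfrac1p\log|f_p|-\varphi$ and $\tfrac1p[f_p=0]-dd^c\varphi$, which is exactly \eqref{e:pp}. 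I expect the only point requiring genuine care to be the middle step, namely confirming that $m$ is a strictly positive finite number independent of $p$, so that the two normalizations $A_p$ and $p$ differ only by a constant factor that cancels in the limit; everything else is a direct substitution into Corollary \ref{C:pol2}.
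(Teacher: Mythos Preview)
Your proposal is correct and follows exactly the approach implicit in the paper, which presents Corollary \ref{C:pol3} as an immediate specialization of Corollary \ref{C:pol2} (introduced with ``In particular, we obtain:'') without further proof. Your verification that $A_p=pm$ with $0<m<\infty$ is precisely the bookkeeping needed to pass from the $A_p$-normalization of Corollary \ref{C:pol2} to the $p$-normalization of \eqref{e:pp}, and the paper simply leaves this to the reader.
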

We can also apply Corollary \ref{C:Lp3} to the setting of polynomials in $\C^n$
and obtain a version of Corollary \ref{C:pol3} for arbitrary $\varphi\in \mathcal{L}(\C^n)$.
\begin{Corollary}\label{C:pol4}
Let $\varphi\in \mathcal{L}(\C^n)$ and let $h$ be the singular Hermitian metric 
on $\mO(1)$ corresponding to $\varphi$. 
Let $\{n_p\}_{p\geq1}$ 
be a sequence of natural numbers such 
that \eqref{n_p} is satisfied.
Consider the metric $h_p$ on $\mO(p)$ given by
$h_p=h^{p-n_p}\otimes h_{\FS}^{n_p}$ (cf.\ \eqref{h_p}).
For $p\geq1$ let $\sigma_p$ be probability measures on 
$H^0_{(2)}(\mathbb{P}^n,\mO(p),h_p)\cong \C_{p,(2)}[\zeta]$ 
satisfying condition (B). 
If $\sum_{p=1}^{\infty}C_p\,p^{-\nu}<\infty$,
then for $\sigma$-a.\,e.\ sequence
$\{f_p\}\in\mathcal{H}$ we have \eqref{e:pp} as $p\to\infty$.
\end{Corollary}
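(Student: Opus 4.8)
The plan is to deduce this from Corollary \ref{C:Lp3} applied to the compact Kähler manifold $X={\mathbb P}^n$ with $\omega=\omega_{\FS}$, the positive line bundle $L=\mO(1)$ with $h_0=h_{\FS}$ (so that $c_1(L,h_0)=\omega_{\FS}$), and the singular metric $h$ attached to $\varphi$. First I would record the dictionary between the affine and projective pictures supplied by the Example above: the global weight $\psi\in PSH({\mathbb P}^n,\omega_{\FS})$ of $h=h_{\FS}e^{-2\psi}$ is exactly $\psi=\wi\varphi$, since the weight of $h$ on $U_0$ with respect to the standard frame $e_1=z_0$ is $\varphi$, while that of $h_{\FS}$ is $\tfrac12\log(1+|\zeta|^2)$, whence $\psi=\varphi-\tfrac12\log(1+|\zeta|^2)=\wi\varphi$ on $\C^n$. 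Because $\wi\varphi\in PSH({\mathbb P}^n,\omega_{\FS})$, we have $c_1(\mO(1),h)=\omega_{\FS}+dd^c\wi\varphi\geq0$, so the hypotheses of Corollary \ref{C:Lp3} on $h$ are met.

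Next I would check that the remaining data coincide. The twisted metric $h_p=h^{p-n_p}\otimes h_{\FS}^{n_p}$ in the statement is precisely the metric \eqref{h_p} of Corollary \ref{C:Lp3} with $h_0=h_{\FS}$; the sequence $\{n_p\}$ obeys \eqref{n_p} by assumption; and the isometry \eqref{e:hip} identifies $H^0_{(2)}({\mathbb P}^n,\mO(p),h_p)$ with $\C_{p,(2)}[\zeta]$, carrying the measures $\sigma_p$ and hence condition (B) across unchanged. Since $c_1(\mO(p),h_p)$ lies in the class $p[\omega_{\FS}]$, one has $A_p=\int_{{\mathbb P}^n}c_1(\mO(p),h_p)\wedge\omega_{\FS}^{n-1}=p\int_{{\mathbb P}^n}\omega_{\FS}^n$, so the hypothesis $\sum_p C_p\,p^{-\nu}<\infty$ is equivalent to $\sum_p C_pA_p^{-\nu}<\infty$. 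It is worth noting that the role of the twist by $n_p$ is to supply the strict positivity \eqref{e:pc}: on $\C^n$ one has $c_1(\mO(p),h_p)=(p-n_p)\,dd^c\varphi+n_p\,\omega_{\FS}\geq n_p\,\omega_{\FS}$, so $a_p=n_p\to\infty$ even when $\varphi$ is merely psh. With all hypotheses in force, Corollary \ref{C:Lp3}(iii) yields, for $\sigma$-a.e.\ $\{s_p\}$,
\[
\frac1p\log|s_p|_{h_{\FS}^p}\to\wi\varphi \ \text{ in } L^1({\mathbb P}^n,\omega_{\FS}^n),\qquad
\frac1p[s_p=0]\to c_1(\mO(1),h)\ \text{ weakly on } {\mathbb P}^n.
\]

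It then remains to transcribe these two conclusions to $\C^n$. Writing $s_p=f_p\,e_p$ with $e_p=z_0^p$ and using $|e_p|_{h_{\FS}^p}=(1+|\zeta|^2)^{-p/2}$ gives $\frac1p\log|s_p|_{h_{\FS}^p}=\frac1p\log|f_p|-\tfrac12\log(1+|\zeta|^2)$ on $\C^n$. Since ${\mathbb P}^n\setminus\C^n$ is $\omega_{\FS}^n$-null, the first convergence restricts to $L^1(\C^n,\omega_{\FS}^n)$, and adding the fixed function $\tfrac12\log(1+|\zeta|^2)\in L^1(\C^n,\omega_{\FS}^n)$ cancels the matching term in $\wi\varphi=\varphi-\tfrac12\log(1+|\zeta|^2)$, leaving $\frac1p\log|f_p|\to\varphi$ in $L^1(\C^n,\omega_{\FS}^n)$. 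Likewise, on $\C^n$ one has $[s_p=0]=[f_p=0]$ and $c_1(\mO(1),h)=dd^c\varphi$, so testing against forms with compact support in $\C^n$ gives $\frac1p[f_p=0]\to dd^c\varphi$ weakly on $\C^n$. This establishes \eqref{e:pp}.

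I do not expect a genuine analytic obstacle here, as the result is a specialization of Corollary \ref{C:Lp3}; the one point requiring care is the bookkeeping of the affine-to-projective dictionary — in particular verifying $\psi=\wi\varphi$ and the exact cancellation of the $\tfrac12\log(1+|\zeta|^2)$ terms — together with the routine check that $L^1$-convergence on ${\mathbb P}^n$ passes to $L^1$-convergence on the full-measure chart $\C^n$.
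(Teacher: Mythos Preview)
Your proposal is correct and follows precisely the route indicated by the paper, which presents Corollary~\ref{C:pol4} without proof as a direct application of Corollary~\ref{C:Lp3} to the setting $X=\mathbb{P}^n$, $L=\mathcal{O}(1)$, $h_0=h_{\FS}$. You have simply supplied the dictionary and bookkeeping that the paper leaves implicit; the only superfluous step is the computation of $A_p$, since Corollary~\ref{C:Lp3} is already stated with the hypothesis $\sum_p C_p\,p^{-\nu}<\infty$ rather than $\sum_p C_pA_p^{-\nu}<\infty$.
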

This is an extension (with a different scalar product) of 
\cite[Theorem 4.2]{BL13} which deals with the special case 
when $\psi=V_{K,Q}^*$ is the 
weighted pluricomplex Green function
of a nonpluripolar compact $K\subset \C^n$ \cite[(3.2)]{BL13}.
\end{Example}

\subsection{Classes of measures verifying assumption (B)}\label{SS:exB}

In this section we give important examples of measures 
that verify condition (B) and we specialize Theorem \ref{th1} to these measures.  

\subsubsection{Gaussians}\label{s:Gauss}
We consider here the measures $\sigma_k$ on $\C^k$ that have Gaussian density,
\begin{equation}\label{e:Gauss}
d\sigma_k(a)=\frac{1}{\pi^k}\,e^{-\|a\|^2}\,dV_k(a)\,,
\end{equation}
where $a=(a_1,\ldots,a_k)\in\C^k$ and $V_k$ is the Lebesgue measure on $\C^k$.

\begin{Lemma}\label{L:Gauss} For every integer $k\geq1$ and every $\nu\geq1$,
\[\int_{\C^k}|\log|\langle a,u\rangle||^\nu\,d\sigma_k(a)
=\Gamma_\nu:=2\int_0^\infty r|\log r|^\nu e^{-r^2}\,dr\,,\,\;
\forall\,u\in\C^k,\;\|u\|=1\,.\]
\end{Lemma}

\begin{proof}
Since $\sigma_k$ is unitary invariant we have
\[\int_{\C^k}|\log|\langle a,u\rangle||^\nu\,d\sigma_k(a)
=\int_{\C^k}|\log|a_1||^\nu\,d\sigma_k(a)=\frac{1}{\pi}\,\int_\C
|\log|a_1||^\nu e^{-|a_1|^2}\,dV_1(a_1)\,.\]
\end{proof}

Lemma \ref{L:Gauss} implies at once that in this case 
Theorem \ref{th1} takes the following simpler form:

\begin{Theorem}\label{T:Gauss}
Assume that $(X,\omega)$, $(L_p,h_p)$ verify the assumptions (A1), (A2), 
and $\sigma_p:=\sigma_{d_p}$ is the measure given by \eqref{e:Gauss} 
on $H^0_{(2)}(X,L_p)\simeq \C^{d_p}$. 
Then the following hold:

\smallskip
\noindent
(i) $\displaystyle\frac{1}{A_p}\big(\E[s_p=0]-c_1(L_p,h_p)\big)\to 0$\,, 
as $p\to \infty$, in the weak sense of currents on $X$. 
Moreover, there exists a sequence $p_j\nearrow\infty$ such that for 
$\sigma$-a.\,e.\ sequence $\{s_p\}\in\mathcal{H}$ we have
\[\frac{1}{A_{p_j}}\log|s_{p_j}|_{h_{p_j}}\to0\,,\,\;
\frac{1}{A_{p_j}}\big([s_{p_j}=0]-c_1(L_{p_j},h_{p_j})\big) \to0\,,\,
\text{ as $j\to\infty$,}\]
in $L^1(X,\omega^n)$, respectively in the weak sense of currents on $X$.

\smallskip
\noindent
(ii) If $\displaystyle\sum_{p=1}^{\infty}A_p^{-\nu}<\infty$ 
for some $\nu\geq1$, then for $\sigma$-a.\,e.\ sequence
$\{s_p\}\in\mathcal{H}$ we have 
\[\frac{1}{A_p}\log|s_p|_{h_p}\to0\,,\,\;
\frac{1}{A_p}\big([s_p=0]-c_1(L_p,h_p)\big) \to0\,,\,\text{ as $p\to\infty$,}\]
in $L^1(X,\omega^n)$, respectively in the weak sense of currents on $X$.
\end{Theorem}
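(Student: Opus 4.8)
The plan is to derive Theorem~\ref{T:Gauss} directly from Theorem~\ref{th1} by checking that the Gaussian measures satisfy condition (B) with a particularly favorable constant, and then feeding the resulting hypotheses into the three parts of Theorem~\ref{th1}. First I would invoke Lemma~\ref{L:Gauss}, which shows that for every $\nu\geq1$ the Gaussian $\sigma_{d_p}$ satisfies
\[
\int_{\C^{d_p}}\big|\log|\langle a,u\rangle|\big|^\nu\,d\sigma_{d_p}(a)=\Gamma_\nu
\]
for every unit vector $u$, \emph{independently of $p$ and of $u$}. Thus condition (B) holds with the constant-in-$p$ choice $C_p=\Gamma_\nu$. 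This is exactly the situation discussed in Remark~\ref{R:indp}, so I can borrow its conclusions.

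With $C_p=\Gamma_\nu$ fixed, I would next verify the growth hypotheses of Theorem~\ref{th1}. By \eqref{e:pc} we have $A_p\geq a_p\int_X\omega^n$ with $a_p\to\infty$, so $A_p\to\infty$ and hence $\Gamma_\nu A_p^{-\nu}\to0$ for every $\nu\geq1$. This means the hypothesis of part~$(i)$ of Theorem~\ref{th1}, $\lim_{p\to\infty}C_pA_p^{-\nu}=0$, is automatically satisfied, and a fortiori so is the $\liminf$ hypothesis of part~$(ii)$. Applying Theorem~\ref{th1}$(i)$ gives $\frac{1}{A_p}\big(\E[s_p=0]-c_1(L_p,h_p)\big)\to0$ weakly, which is the first assertion of part~$(i)$ here; applying Theorem~\ref{th1}$(ii)$ produces the subsequence $p_j\nearrow\infty$ along which the almost-sure $L^1$ convergence $\frac{1}{A_{p_j}}\log|s_{p_j}|_{h_{p_j}}\to0$ and the weak current convergence $\frac{1}{A_{p_j}}\big([s_{p_j}=0]-c_1(L_{p_j},h_{p_j})\big)\to0$ hold, which is the second assertion of part~$(i)$.

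For part~$(ii)$ of the present theorem I would simply note that when one additionally assumes $\sum_{p=1}^\infty A_p^{-\nu}<\infty$ for some $\nu\geq1$, then since $C_p=\Gamma_\nu$ is constant we get $\sum_{p=1}^\infty C_pA_p^{-\nu}=\Gamma_\nu\sum_{p=1}^\infty A_p^{-\nu}<\infty$, which is precisely the summability hypothesis of Theorem~\ref{th1}$(iii)$. That part then yields the full-sequence almost-sure convergence \eqref{e:cp}, which is the claimed conclusion.

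Honestly there is no serious obstacle here: the content is entirely contained in Lemma~\ref{L:Gauss} (which gives the uniform, $p$-independent bound) plus Theorem~\ref{th1}, and the proof is a matter of matching the three regimes of Theorem~\ref{th1} to the two parts stated for the Gaussian case. The only point requiring the slightest care is the bookkeeping of which growth condition ($\lim=0$, $\liminf=0$, or summable) corresponds to which conclusion, and observing that the constancy of $C_p$ collapses the first two conditions into the automatically-satisfied statement $A_p\to\infty$. The whole argument is therefore a short deduction, as the sentence immediately preceding the statement (``Lemma~\ref{L:Gauss} implies at once that\ldots'') already advertises.
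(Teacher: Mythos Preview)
Your proposal is correct and follows exactly the approach intended by the paper: the theorem is stated as an immediate consequence of Lemma~\ref{L:Gauss} (giving condition~(B) with $C_p=\Gamma_\nu$ independent of $p$) and Theorem~\ref{th1}, with Remark~\ref{R:indp} supplying the observation that $A_p\to\infty$ makes the hypotheses of parts~$(i)$ and~$(ii)$ automatic. The paper gives no further argument beyond the sentence you quoted, so your write-up simply makes explicit the matching of hypotheses that the authors left to the reader.
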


\subsubsection{Fubini-Study volumes}\label{s:FSvol}
The Fubini-Study volume on the projective space ${\mathbb P}^k\supset\C^k$ 
is given by the measure $\sigma_k$ on $\C^k$ with density 
\begin{equation}\label{e:FSvol}
d\sigma_k(a)=\frac{k!}{\pi^k}\,\frac{1}{(1+\|a\|^2)^{k+1}}\,dV_k(a)\,,
\end{equation}
where $a=(a_1,\ldots,a_k)\in\C^k$ and $V_k$ is the Lebesgue measure on $\C^k$.

\begin{Lemma}\label{L:FSvol} For every integer $k\geq1$ and every $\nu\geq1$,
\[\int_{\C^k}|\log|\langle a,u\rangle||^\nu\,d\sigma_k(a)
=\Gamma_\nu:=2\int_0^\infty\frac{r|\log r|^\nu}{(1+r^2)^2}\,dr\,,\,\;
\forall\,u\in\C^k,\;\|u\|=1\,.\]
\end{Lemma}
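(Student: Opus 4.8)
The plan is to reduce the computation to a one-dimensional integral exactly as in the Gaussian case (Lemma \ref{L:Gauss}), exploiting the rotational symmetry of the Fubini-Study measure. First I would observe that the density \eqref{e:FSvol} is unitary invariant: since $\|a\|^2$ is preserved under any unitary transformation $U\in U(k)$, so is $d\sigma_k$. Given a unit vector $u\in\C^k$, choose a unitary $U$ with $Uu=e_1=(1,0,\ldots,0)$. Then $\langle a,u\rangle=\langle Ua,e_1\rangle=(Ua)_1$, and the change of variables $a\mapsto Ua$ shows that
\[\int_{\C^k}|\log|\langle a,u\rangle||^\nu\,d\sigma_k(a)
=\int_{\C^k}|\log|a_1||^\nu\,d\sigma_k(a).\]
This collapses the problem to integrating over the first coordinate only.

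Next I would integrate out the remaining variables $a_2,\ldots,a_k$ to obtain the marginal density of $a_1$. Writing $\|a\|^2=|a_1|^2+\|a'\|^2$ with $a'=(a_2,\ldots,a_k)$, the marginal is
\[\frac{k!}{\pi^k}\int_{\C^{k-1}}\frac{dV_{k-1}(a')}{(1+|a_1|^2+\|a'\|^2)^{k+1}}.\]
Using polar coordinates in $\C^{k-1}$ (so that $dV_{k-1}$ has total angular mass $\pi^{k-1}/(k-1)!$ times the radial factor $s^{k-2}\,ds$ with $s=\|a'\|$, after substituting $t=s^2$) this integral evaluates to a standard Beta-type integral in the variable $1+|a_1|^2$, yielding a marginal density on $\C$ of the form $\tfrac1\pi(1+|a_1|^2)^{-2}\,dV_1(a_1)$; the combinatorial constants collapse to exactly this after the integration. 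Passing to polar coordinates $a_1=re^{i\theta}$ then gives
\[\int_\C|\log|a_1||^\nu\,\frac{1}{\pi(1+|a_1|^2)^2}\,dV_1(a_1)
=2\int_0^\infty\frac{r|\log r|^\nu}{(1+r^2)^2}\,dr,\]
which is the claimed value of $\Gamma_\nu$.

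The only genuine point requiring care is the marginalization step, where one must verify that the constant $k!/\pi^k$ combined with the integral over $\C^{k-1}$ produces precisely the coefficient $1/\pi$ and the exponent $2$ independent of $k$; this is where I would be most careful with bookkeeping of the normalizing constants and the Beta function identity, though it is entirely routine. One should also note for completeness that $\Gamma_\nu<\infty$: the integrand $r|\log r|^\nu(1+r^2)^{-2}$ is integrable near $r=0$ because $r|\log r|^\nu$ is, and near $r=\infty$ because of the $r^{-4}$ decay from the denominator against the $r$ in the numerator. Thus, just as with the Gaussian, the finiteness and $k$-independence of the constant confirm that the Fubini-Study volumes verify condition (B) for every $\nu\geq1$ with $C_p=\Gamma_\nu$ independent of $p$.
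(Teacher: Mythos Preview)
Your proof is correct and follows essentially the same route as the paper: reduce by unitary invariance to $\int_{\C^k}|\log|a_1||^\nu\,d\sigma_k(a)$, then integrate out the remaining $k-1$ complex variables in spherical coordinates to collapse to the one-dimensional integral $2\int_0^\infty r|\log r|^\nu(1+r^2)^{-2}\,dr$. The paper carries out the radial integration in $\rho=\|a'\|$ via the explicit substitution $\rho^2=(1+r^2)x/(1-x)$, whereas you invoke a Beta-type identity for the marginal; these are the same computation, and your stated marginal $\tfrac{1}{\pi}(1+|a_1|^2)^{-2}\,dV_1(a_1)$ is exactly what the paper's calculation yields.
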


\begin{proof}
Recall that the area of the unit sphere in $\C^k$ is $s_{2k}=2\pi^k/(k-1)!$. 
Since $\sigma_k$ is unitary invariant we have
\[\int_{\C^k}|\log|\langle a,u\rangle||^\nu\,d\sigma_k(a)
=\int_{\C^k}|\log|a_1||^\nu\,d\sigma_k(a)=
4k(k-1)\int_0^\infty\int_0^\infty\frac{r|\log r|^\nu\rho^{2k-3}}{(1+r^2+\rho^2)^{k+1}}\,d\rho\,dr,\]
where we used polar coordinates for $a_1$ and spherical coordinates for $(a_2,\ldots,a_k)\in\C^{k-1}$. 
Changing variables $\rho^2=(1+r^2)x(1-x)^{-1}$, $2\rho\,d\rho
=(1+r^2)(1-x)^{-2}\,dx$, in the inner integral we obtain
\[\int_0^\infty\frac{\rho^{2k-3}}{(1+r^2+\rho^2)^{k+1}}\,d\rho
=\frac{1}{2(1+r^2)^2}\,\int_0^1x^{k-2}(1-x)\,dx=\frac{1}{2k(k-1)(1+r^2)^2}\,,\]
and the lemma follows.
\end{proof}

Lemma \ref{L:FSvol} shows that the conclusions of Theorem \ref{T:Gauss} 
hold for the measures $\sigma_p:=\sigma_{d_p}$ given by \eqref{e:FSvol} 
on $H^0_{(2)}(X,L_p)\simeq \C^{d_p}$. 

\medskip

More generally, one can consider radial probability measures on $\C^k$ with density
\begin{equation}\label{e:genrad}
d\sigma_{k,\alpha}(a)=\frac{\Gamma(k+\alpha)}{\Gamma(\alpha)\pi^k}\,\frac{1}{(1+\|a\|^2)^{k+\alpha}}\,dV_k(a)\,,
\end{equation}
where $\alpha>0$ and $\Gamma$ is the Gamma function. As in the proof of Lemma \ref{L:FSvol} one 
can show that for every integer $k\geq1$ and every $\nu\geq1$,
\[\int_{\C^k}|\log|\langle a,u\rangle||^\nu\,d\sigma_{k,\alpha}(a)
=\Gamma_{\nu,\alpha}:=2\alpha\int_0^\infty\frac{r|\log r|^\nu}{(1+r^2)^{1+\alpha}}\,dr\,,\,\;
\forall\,u\in\C^k,\;\|u\|=1\,.\]

\subsubsection{Area measure of spheres}\label{s:sphere}
Let ${\mathcal A}_k$ be the surface measure on the unit sphere 
${\mathbf S}^{2k-1}$ in ${\mathbb C}^k$, 
so ${\mathcal A}_k\big({\mathbf S}^{2k-1}\big)=2\pi^k/(k-1)!$, and let 

\begin{equation}\label{e:sphere}
\sigma_k=\frac{1}{{\mathcal A}_k\big({\mathbf S}^{2k-1}\big)}\,{\mathcal A}_k\,.
\end{equation}

\begin{Lemma}\label{L:sphere} If $\nu\geq1$ there exists a constant 
$M_\nu>0$ such that for every integer $k\geq2$,
\[\int_{{\mathbf S}^{2k-1}}|\log|\langle a,u\rangle||^\nu\,d\sigma_k(a)
\leq M_\nu\,(\log k)^\nu\,,\,\;\forall\,u\in\C^k,\;\|u\|=1\,.\]
\end{Lemma}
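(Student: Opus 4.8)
The plan is to reduce the estimate to a one–dimensional integral by exploiting the unitary invariance of $\sigma_k$, exactly as in Lemmas \ref{L:Gauss} and \ref{L:FSvol}, and then to control the distribution of $|a_1|$ under the surface measure. By invariance we may take $u=(1,0,\ldots,0)$, so that $\langle a,u\rangle=a_1$, and we are led to compute
\[
\int_{{\mathbf S}^{2k-1}}\big|\log|a_1|\big|^\nu\,d\sigma_k(a).
\]
The key point is that the first coordinate $a_1$ of a uniformly distributed point on ${\mathbf S}^{2k-1}\subset\C^k$ has an explicit density: writing $t=|a_1|^2\in[0,1]$, a standard computation (spherical coordinates, or the projection of uniform sphere measure) gives that $t$ is distributed according to $(k-1)(1-t)^{k-2}\,dt$ on $[0,1]$, i.e.\ $t$ is a $\mathrm{Beta}(1,k-1)$ random variable. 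Thus the integral becomes
\[
\frac{k-1}{4^{?}}\cdots\;=\;\frac{k-1}{2^{\nu}}\int_0^1\big|\log t\big|^\nu(1-t)^{k-2}\,dt,
\]
and everything reduces to bounding this single Beta-type integral by a constant times $(\log k)^\nu$.

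The main work, and the step I expect to be the genuine obstacle, is the asymptotic analysis of $I_k:=(k-1)\int_0^1|\log t|^\nu(1-t)^{k-2}\,dt$ as $k\to\infty$, since the factor $(k-1)$ grows while the integral shrinks, and the logarithmic singularity at $t=0$ must be handled carefully. The natural move is to split the domain at $t=1/k$. On the range $t\in[1/k,1]$ one has $|\log t|\leq\log k$, so that part contributes at most $(\log k)^\nu (k-1)\int_0^1(1-t)^{k-2}\,dt=(\log k)^\nu$, which is already of the desired order. On the range $t\in[0,1/k]$ one bounds $(1-t)^{k-2}\leq1$ and is left to show that $(k-1)\int_0^{1/k}|\log t|^\nu\,dt$ stays bounded by a constant times $(\log k)^\nu$; substituting $t=s/k$ turns this into $\int_0^1|\log s-\log k|^\nu\,ds$, and expanding $|\log s-\log k|^\nu\leq 2^{\nu-1}\big((\log k)^\nu+|\log s|^\nu\big)$ together with the finiteness of $\int_0^1|\log s|^\nu\,ds$ yields the bound. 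Collecting the two pieces produces a constant $M_\nu$ depending only on $\nu$ (through $\int_0^1|\log s|^\nu ds$ and the combinatorial factors), uniform in $k$.

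An alternative, perhaps cleaner, route is to recognize the integral in closed form: differentiating the Beta integral $\int_0^1 t^{a-1}(1-t)^{k-2}\,dt=B(a,k-1)$ repeatedly in $a$ at $a=1$ expresses $\int_0^1(\log t)^\nu(1-t)^{k-2}\,dt$ in terms of derivatives of $B(a,k-1)$, i.e.\ in terms of polygamma functions $\psi^{(m)}$ evaluated at $1$ and $k$; the leading term is governed by $\psi(k)-\psi(1)\sim\log k$, which again gives the $(\log k)^\nu$ growth. I would nonetheless favor the elementary split-and-substitute argument above, as it avoids tracking the precise polygamma expansion and directly exhibits the uniform constant $M_\nu$, which is all that the statement requires. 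One should note that the slower decay here (no bound independent of $k$, in contrast to the constant $\Gamma_\nu$ for the Gaussian and Fubini--Study cases) is intrinsic: the mass of $|a_1|^2$ concentrates near $0$ at scale $1/k$, forcing the logarithmic blow-up in $k$.
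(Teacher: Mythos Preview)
Your approach is correct and essentially coincides with the paper's. Both proofs reduce by unitary invariance to a single coordinate, arrive at the same key one-dimensional integral of order $k\int_0^1(1-t)^{k-2}|\log t|^\nu\,dt$, and then bound it by splitting the domain. The differences are minor and purely technical: you identify the law of $|a_1|^2$ directly as $\mathrm{Beta}(1,k-1)$, which is cleaner than the paper's explicit spherical-coordinate computation (which leans on \cite[Lemma 4.3]{CMM} and yields only an inequality with an unspecified constant $c$); and the paper splits at $t=1/k^2$ and handles the small-$t$ part via the elementary bound $t^{1/2}|\log t|^\nu\leq(2\nu/e)^\nu$, whereas you split at $t=1/k$ and use the substitution $t=s/k$ together with convexity of $x\mapsto x^\nu$. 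Both splitting arguments work equally well. One cosmetic point: your bound comes out as $A_\nu(\log k)^\nu+B_\nu$, so to match the stated form $M_\nu(\log k)^\nu$ you should remark that the additive constant $B_\nu$ can be absorbed for $k\geq2$ (since $(\log k)^\nu\geq(\log 2)^\nu$), while the case $k=1$ is trivial because then $|a_1|\equiv1$.
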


\begin{proof}  We use spherical coordinates 
$(\theta_1,\ldots,\theta_{2k-2},\varphi)\in\left[-\frac{\pi}{2}, 
\frac{\pi}{2}\right]^{2k-2}\times[0,2\pi]$ 
on ${\mathbf S}^{2k-1}$ such that 
$$a_k=\sin\theta_{2k-3}\cos\theta_{2k-2}+i\sin\theta_{2k-2}\,,\;\;
d{\mathcal A}_k=\cos\theta_1\cos^2\theta_2\ldots\cos^{2k-2}\theta_{2k-2}\;
d\theta_1\ldots d\theta_{2k-2}d\varphi\,.$$
Since $\sigma_k$ is unitary invariant we argue in the proof of \cite[Lemma 4.3]{CMM} 
and obtain that there exists a constant $c>0$ such that for every $k$ and $\nu$, 
\begin{align*}
\int_{{\mathbf S}^{2k-1}}|\log|\langle a,u\rangle||^\nu\,d\sigma_k(a)
&=\int_{{\mathbf S}^{2k-1}}|\log|a_k||^\nu\,d\sigma_k(a)\\
&\leq\frac{ck}{2^\nu}\int_0^1\int_0^1(1-x^2)^{k-3/2}(1-y^2)^{k-2}|
\log(x^2+y^2-x^2y^2)|^\nu\,dxdy\\
&\leq\frac{\pi ck}{2^{\nu+1}}\int_0^1(1-t)^{k-2}|\log t|^\nu\,dt\;.
\end{align*}
Note that 
\[f(t):=t^{1/2}|\log t|^\nu\leq f\big(e^{-2\nu}\big)=(2\nu/e)^\nu\,,\, \text{ for $0<t\leq1$.}\]
It follows that 
\begin{align*}
\int_0^1(1-t)^{k-2}|\log t|^\nu\,dt
&\leq\left(\frac{2\nu}{e}\right)^\nu\int_0^{1/k^2}(1-t)^{k-2}t^{-1/2}\,dt
+\int_{1/k^2}^1(1-t)^{k-2}|\log t|^\nu\,dt\\
&\leq\left(\frac{2\nu}{e}\right)^\nu\int_0^{1/k^2}t^{-1/2}\,dt
+2^\nu(\log k)^\nu\int_{1/k^2}^1(1-t)^{k-2}\,dt\\
&\leq\left(\frac{2\nu}{e}\right)^\nu\frac{2}{k}+\frac{2^\nu(\log k)^\nu}{k-1}\,,
\end{align*}
which implies the conclusion of the lemma.
\end{proof}

Lemma \ref{L:sphere} implies that in this case 
Theorem \ref{th1} takes the following simpler form:

\begin{Theorem}\label{T:sphere}
Assume that $(X,\omega)$, $(L_p,h_p)$ verify the assumptions (A1), (A2), 
and $\sigma_p:=\sigma_{d_p}$ is the measure given by \eqref{e:sphere} 
on the unit sphere of $H^0_{(2)}(X,L_p)\simeq \C^{d_p}$. 
Then the following hold:

\smallskip
\noindent
(i) If $\displaystyle\lim_{p\to\infty}\frac{\log d_p}{A_p}=0$ then 
$\displaystyle\frac{1}{A_p}\big(\E[s_p=0]-c_1(L_p,h_p)\big)\to 0$\,, 
as $p\to \infty$, in the weak sense of currents on $X$. 

\smallskip
\noindent
(ii) If $\displaystyle\liminf_{p\to\infty}\frac{\log d_p}{A_p}=0$ 
then there exists a sequence $p_j\nearrow\infty$ such that for $\sigma$-a.\,e.\ sequence 
$\{s_p\}\in\mathcal{H}$ we have
\[\frac{1}{A_{p_j}}\log|s_{p_j}|_{h_{p_j}}\to0\,,\,\;
\frac{1}{A_{p_j}}\big([s_{p_j}=0]-c_1(L_{p_j},h_{p_j})\big) \to0\,,\,\text{ as $j\to\infty$,}\]
in $L^1(X,\omega^n)$, respectively in the weak sense of currents on $X$.

\smallskip
\noindent
(iii) If $\displaystyle\sum_{p=1}^{\infty}\left(\frac{\log d_p}{A_p}\right)^\nu<\infty$ 
for some $\nu\geq1$, then for $\sigma$-a.\,e.\ sequence
$\{s_p\}\in\mathcal{H}$ we have 
\[\frac{1}{A_p}\log|s_p|_{h_p}\to0\,,\,\;
\frac{1}{A_p}\big([s_p=0]-c_1(L_p,h_p)\big) \to0\,,\,\text{ as $p\to\infty$,}\]
in $L^1(X,\omega^n)$, respectively in the weak sense of currents on $X$.
\end{Theorem}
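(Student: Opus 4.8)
The plan is to obtain Theorem \ref{T:sphere} as a direct specialization of Theorem \ref{th1}, with the abstract moment constants $C_p$ of condition (B) replaced by the explicit bound furnished by Lemma \ref{L:sphere}. First I would apply Lemma \ref{L:sphere} with $k=d_p$: since $\sigma_p=\sigma_{d_p}$ is the normalized surface measure on the unit sphere of $\hp\simeq\C^{d_p}$, the lemma shows that for every fixed $\nu\geq1$,
\[\int_{\C^{d_p}}\big|\log|\langle a,u\rangle|\,\big|^\nu\,d\sigma_p(a)\leq M_\nu\,(\log d_p)^\nu\]
for all unit vectors $u\in\C^{d_p}$, which is precisely condition (B) with $C_p=M_\nu(\log d_p)^\nu$.

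Next I would substitute this value of $C_p$ into the three hypotheses of Theorem \ref{th1}. Since
\[C_pA_p^{-\nu}=M_\nu\left(\frac{\log d_p}{A_p}\right)^\nu,\]
and $M_\nu>0$ depends only on $\nu$ (in particular it is independent of $p$), this constant factor is irrelevant for the vanishing of a limit or $\liminf$ and for the convergence of a series. Because $t\mapsto t^\nu$ is continuous and strictly increasing on $[0,\infty)$, one has $\lim_p(\log d_p/A_p)^\nu=0$ if and only if $\lim_p\log d_p/A_p=0$, and likewise for the $\liminf$; this explains why no exponent $\nu$ appears in the hypotheses of parts (i) and (ii). For part (iii) the series $\sum_p(\log d_p/A_p)^\nu$ genuinely depends on $\nu$, and its convergence (up to the factor $M_\nu$) is exactly the hypothesis of part (iii) of Theorem \ref{T:sphere}, for the particular $\nu\geq1$ for which it holds.

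Finally I would invoke Theorem \ref{th1} part by part. With the transformed hypotheses in force, its parts (i), (ii), (iii) yield respectively the weak convergence of $\frac{1}{A_p}\big(\E[s_p=0]-c_1(L_p,h_p)\big)$, the almost sure convergence along a subsequence $p_j\nearrow\infty$, and the full almost sure convergence of $\frac{1}{A_p}\log|s_p|_{h_p}$ and of $\frac{1}{A_p}\big([s_p=0]-c_1(L_p,h_p)\big)$, matching the three assertions of Theorem \ref{T:sphere} verbatim. I do not expect any genuine obstacle: the entire analytic content has already been absorbed into Lemma \ref{L:sphere} (the sharp $(\log k)^\nu$ moment bound on the sphere) and into Theorem \ref{th1}. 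The only point needing a word of care is the elementary equivalence of the transformed hypotheses, which rests on the monotonicity of $t\mapsto t^\nu$ and the $p$-independence of $M_\nu$.
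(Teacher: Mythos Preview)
Your proposal is correct and matches the paper's own argument: the paper simply states that Lemma \ref{L:sphere} implies condition (B) with $C_p=M_\nu(\log d_p)^\nu$, whence Theorem \ref{th1} specializes to Theorem \ref{T:sphere}. Your added remarks on why the exponent $\nu$ can be dropped in the hypotheses of (i) and (ii) are accurate and make explicit what the paper leaves implicit.
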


We remark that the assertion $(ii)$ of Theorem \ref{T:sphere} 
was proved in \cite[Theorem 4.2]{CMM}. That paper also gives two general 
examples of sequences of line bundles $L_p$ for which  
\[\lim_{p\to0}\frac{\log\dim H^0(X,L_p)}{A_p}=0,\]
see \cite[Proposition 4.4]{CMM} and \cite[Proposition 4.5]{CMM}. 
In particular, if $X$ is smooth and each $L_p$ is semiample then 
it is shown in  \cite[Proposition 4.5]{CMM} that 
\[\dim H^0(X,L_p)=O(A_p^N).\] 
Therefore $\displaystyle\lim_{p\to\infty}(\log d_p)/A_p=0$. Moreover, 
since $\log d_p<\sqrt{A_p}$ for $p$ sufficiently large, the hypothesis that 
$\sum_{p=1}^{\infty}\left(\frac{\log d_p}{A_p}\right)^\nu<\infty$, 
for some $\nu\geq1$, in Theorem \ref{T:sphere} $(iii)$, can be replaced by the condition 
that $\sum_{p=1}^{\infty}A_p^{-\nu}<\infty$ for some $\nu\geq1$.

\begin{Remark}
We note that for unitary invariant measures $\sigma_p$, 
like those from Sections \ref{s:Gauss}-\ref{s:sphere}, 
the probability space $(\hp,\sigma_p)$ does not depend 
on the choice of orthonormal basis. Other important classes 
of probability measures which do not depend on the choice of 
orthonormal basis and are not unitary invariant are given in \cite{FZ11} 
(see formulas (5), (6) and (7) therein). These measures $\gamma_N$ 
are easily seen to be dominated by measures $\sigma_N$ on the space 
$\mathcal P_N\simeq{\mathbb C}^{N+1}$ of polynomials in $\mathbb C$ 
of degree at most $N$, with Gaussian type density of the form 
\[d\sigma_N(a)=e^{C-\varepsilon\|a\|^2}\,dV_{N+1}(a)\,.\]
Indeed, the polynomial $P(x)$ from \cite[(7)]{FZ11} is bounded from 
below on $[0,+\infty)$, hence $P(x)\geq\varepsilon x-C$ for all $x\geq0$, 
with some constants $\varepsilon,C>0$. An argument analogous to that 
in the proof of Lemma \ref{L:Gauss} shows that the measures $\gamma_N$ 
verify assumption (B) for every $\nu\geq1$ with constants $C_N=\Gamma_\nu$ 
independent of $N$. In particular, if the metric $h$ and the measure $\nu$ 
in the definition of $\gamma_N$ \cite[(5)]{FZ11} is positively curved, 
respectively a K\"ahler form on ${\mathbb P}^1$, then our Theorem \ref{th1} holds in 
the setting of \cite{FZ11} for the measures $\gamma_N$.
\end{Remark}

\subsubsection{Measures with heavy tail and small ball probability}\label{s:tail}
Let $\sigma_p$ be probability measures on $\hp\simeq \C^{d_p}$ verifying the following: 
There exist a constant $\rho>1$ and for every $p\geq1$ constants $C'_p>0$ such that:
\begin{itemize}
\item[(B1)] For all $R\geq1$ the \textit{tail probability} satisfies 
\[\sigma_p\big(\{a\in \C^{d_p}:\log\|a\|>R\}\big)\leq \frac{C'_p}{R^\rho}\,;\] 
\item[(B2)] For all $R\geq1$ and for each unit vector $u\in\Cdp$, 
the \textit{small ball probability} satisfies
\[\sigma_p\big(\{a\in\Cdp:\log |\langle a,u\rangle|<-R\}\big)\leq \frac{C'_p}{R^\rho}\,\cdot\]
\end{itemize}

\begin{Lemma}\label{lem1}
If $\sigma_p$ are probability measures on $\C^{d_p}$ verifying (B1) and (B2) 
with some constant $\rho>1$, then $\sigma_p$ verify (B) for any constant $1\leq\nu<\rho$.
\end{Lemma}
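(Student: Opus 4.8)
The plan is to estimate the moment $\int_{\Cdp}|\log|\langle a,u\rangle||^\nu\,d\sigma_p(a)$ directly through the distribution function of the real random variable $Y_u(a):=\log|\langle a,u\rangle|$ under $\sigma_p$. The basic tool is the layer-cake (distribution function) representation
\[
\int_{\Cdp}|Y_u|^\nu\,d\sigma_p=\int_0^\infty \nu\,t^{\nu-1}\,\sigma_p\big(\{a:|Y_u(a)|>t\}\big)\,dt,
\]
which is valid for $\nu\geq1$ and reduces everything to a uniform (in $u$) bound on the tail $\sigma_p(|Y_u|>t)$. In particular this representation makes sense even on the $\sigma_p$-null set where $\langle a,u\rangle=0$, since that set is absorbed into the negative tail.

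First I would split $\{|Y_u|>t\}=\{Y_u>t\}\cup\{Y_u<-t\}$ and treat the two halves by the two hypotheses. The negative part is exactly (B2): for $t\geq1$,
\[
\sigma_p\big(\{a:\log|\langle a,u\rangle|<-t\}\big)\leq \frac{C'_p}{t^\rho}.
\]
For the positive part I would use the Cauchy--Schwarz inequality $|\langle a,u\rangle|\leq\|a\|\,\|u\|=\|a\|$, so that $\{Y_u>t\}\subseteq\{\log\|a\|>t\}$; then (B1) gives $\sigma_p(Y_u>t)\leq C'_p/t^\rho$ for $t\geq1$. Combining, $\sigma_p(|Y_u|>t)\leq 2C'_p/t^\rho$ for all $t\geq1$ and all unit vectors $u$.

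Then I would split the layer-cake integral at $t=1$. On $[0,1]$ one uses the trivial bound $\sigma_p(|Y_u|>t)\leq1$, contributing at most $\int_0^1\nu\,t^{\nu-1}\,dt=1$. On $[1,\infty)$ one inserts the tail bound:
\[
\int_1^\infty \nu\,t^{\nu-1}\cdot\frac{2C'_p}{t^\rho}\,dt=2\nu C'_p\int_1^\infty t^{\nu-1-\rho}\,dt=\frac{2\nu C'_p}{\rho-\nu}.
\]
This yields (B) with $C_p:=1+\dfrac{2\nu C'_p}{\rho-\nu}$.

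The only genuinely delicate point --- and where the hypothesis is used essentially --- is the convergence of the integral on $[1,\infty)$: the exponent $\nu-1-\rho$ must be strictly less than $-1$, i.e.\ $\nu<\rho$, which is exactly the stated range. Everything else is routine; in particular the uniformity in $u$ is automatic, since (B2) is uniform in $u$ and (B1) does not involve $u$ at all (after the Cauchy--Schwarz reduction), so the resulting constant $C_p$ depends only on $C'_p$, $\nu$ and $\rho$, as required.
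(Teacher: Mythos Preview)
Your proof is correct and follows essentially the same route as the paper: the layer-cake formula, the tail bound $\sigma_p(|Y_u|>t)\leq 2C'_p/t^\rho$ for $t\geq1$ obtained by combining (B1) and (B2), and the split of the integral at $t=1$, arriving at the identical constant $C_p=1+\dfrac{2\nu C'_p}{\rho-\nu}$. If anything, you are slightly more explicit than the paper in justifying the positive-tail bound via $|\langle a,u\rangle|\leq\|a\|$.
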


\begin{proof} Let $\nu<\rho$ and $u\in\C^{d_p}$ be a unit vector. By (B1), (B2) we have
\[\sigma_p\big(\{a\in\Cdp: |\log|\langle a,u\rangle||>R\}\big)
\leq \frac{2C'_p}{R^\rho}\,,\,\;\forall\,R\geq1\,.\]
Hence
\begin{align*}
\int_{\C^{d_p}}|\log|\langle a,u\rangle||^\nu\,d\sigma_p(a)&=
\nu\int_0^\infty R^{\nu-1}\sigma_p\big(\{a\in\Cdp: |\log|\langle a,u\rangle||>R\}\big)\,dR\\
&\leq\nu\int_0^1 R^{\nu-1}\,dR+2\nu C'_p\int_1^\infty R^{\nu-\rho-1}\,dR
=1+\frac{2\nu C'_p}{\rho-\nu}=:C_p\,.
\end{align*}
\end{proof}

\subsubsection{Random holomorphic sections with i.i.d.\ coefficients}\label{iidcase}
Next, we consider random linear combinations of the orthonormal basis 
$(S_j^p)_{j=1}^{d_p}$ with independent identically distributed (i.i.d.)\ coefficients. 
More precisely, let $\{a_j^p\}_{j=1}^{d_p}$ be an array 
of i.i.d.\ complex random variables whose distribution law is denoted by $\boldsymbol{P}$. 
Then a random holomorphic section is of the form
$$s_p=\sum_{j=1}^{d_p}a_j^pS_j^p.$$ 
We endow the space $H^0_{(2)}(X,L_p)$ with the $d_p$-fold product measure 
$\sigma_p$ induced by $\boldsymbol{P}$.

\begin{Lemma}\label{iid} Assume that $a_j^p$ are i.i.d.\ complex valued 
random variables whose distribution law $\boldsymbol{P}$ has density $\phi$, 
such that $\phi:\C\to[0,M]$ is a bounded function and there exist $c>0,\,\rho>1$ with 
\begin{equation}\label{tailc} 
\boldsymbol{P}(\{z\in\C: \log|z|>R\})\leq \frac{c}{R^{\rho}} \,,\,\;\forall\,R\geq1.
\end{equation}
Then the product measures $\sigma_p$ on $\Cdp$ satisfy condition (B) 
for any $1\leq \nu<\rho$, with constants  
$C_p=\Gamma d_p^{\nu/\rho}$, where $\Gamma=\Gamma(M,c,\rho,\nu)>0$. 
In particular, if $d_p=O(A_p^N)$ for some $N\in\N$ and $\rho>N$, 
then $\sigma_p$ satisfy condition (B) for any $1\leq \nu<\rho$ with $C_p=O(A_p^{N\nu/\rho})=o(A_p^\nu)$.
\end{Lemma}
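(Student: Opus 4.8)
The plan is to estimate directly the quantity appearing in condition (B), namely
$I_p(u):=\int_{\Cdp}\big|\log|\langle a,u\rangle|\big|^\nu\,d\sigma_p(a)$ for a fixed unit vector $u\in\Cdp$, and to show $I_p(u)\le \Gamma\,d_p^{\nu/\rho}$ with $\Gamma=\Gamma(M,c,\rho,\nu)$ independent of $u$ and of $p$. Writing $W:=\langle a,u\rangle=\sum_{j=1}^{d_p}a_ju_j$, the layer-cake formula gives $I_p(u)=\nu\int_0^\infty R^{\nu-1}\,\sigma_p\big(\,\big|\log|W|\big|>R\,\big)\,dR$, and I would split the event $\{\,|\log|W||>R\,\}$ into the \emph{tail} event $\{\log|W|>R\}$ and the \emph{small-ball} event $\{\log|W|<-R\}$. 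Granting the bound $C_p=\Gamma d_p^{\nu/\rho}$, the final ``in particular'' assertion is immediate: if $d_p=O(A_p^N)$ and $\rho>N$, then $C_p=O(A_p^{N\nu/\rho})$ with $N\nu/\rho<\nu$, so $C_p=o(A_p^\nu)$ since $A_p\to\infty$.

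The two probabilistic inputs are as follows. For the small-ball event I would first show that $W$ has a bounded density: choosing $k$ with $|u_k|^2=\max_j|u_j|^2\ge 1/d_p$ and writing $W=a_ku_k+V$ with $V$ independent of $a_k$, the variable $a_ku_k$ has density $z\mapsto|u_k|^{-2}\phi(z/u_k)$, bounded by $M/|u_k|^2\le Md_p$; convolving with the law of $V$ preserves this sup-bound, so the density of $W$ is $\le Md_p$ everywhere. Integrating over the disc of radius $e^{-R}$ yields $\sigma_p(\log|W|<-R)\le\pi Md_p\,e^{-2R}$. For the tail event I would use Cauchy--Schwarz, $|W|\le\|a\|\,\|u\|=\|a\|$, together with the union bound: since $\|a\|>e^R$ forces some $|a_j|>e^R/\sqrt{d_p}$, one gets $\sigma_p(\log|W|>R)\le d_p\,\boldsymbol{P}\big(\log|z|>R-\tfrac12\log d_p\big)$, and \eqref{tailc} bounds this by $c\,d_p\,(R-\tfrac12\log d_p)^{-\rho}$ whenever $R-\tfrac12\log d_p\ge1$.

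It then remains to integrate these bounds against $\nu R^{\nu-1}\,dR$, using the trivial bound by $1$ wherever the displayed estimates exceed $1$. The small-ball term crosses over at $R_0=\tfrac12\log(\pi Md_p)$; bounding the probability by $1$ below $R_0$ and by $\pi Md_p\,e^{-2R}$ above it, both pieces are $O\big((\log d_p)^\nu\big)$, hence $o(d_p^{\nu/\rho})$. The tail term produces the stated exponent: below the crossover $R_1\asymp(cd_p)^{1/\rho}$ the probability is bounded by $1$ and contributes $\int_0^{R_1}\nu R^{\nu-1}\,dR=R_1^\nu=O(d_p^{\nu/\rho})$, while above $R_1$ the bound $c\,d_p\,(R-\tfrac12\log d_p)^{-\rho}$ is integrable (as $\rho>\nu$) and also contributes $O(d_p^{\nu/\rho})$. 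Collecting terms and absorbing the logarithmic pieces into a single constant $\Gamma$ (using $(\log d_p)^\nu=o(d_p^{\nu/\rho})$ and $d_p\ge1$) yields $I_p(u)\le\Gamma d_p^{\nu/\rho}$, i.e.\ condition (B) with $C_p=\Gamma d_p^{\nu/\rho}$.

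I expect the main obstacle to be the sharpness of the exponent in the tail estimate. A naive route through Lemma \ref{lem1} is too lossy: the small-ball bound $\pi Md_p\,e^{-2R}\le(\mathrm{const})\,d_p R^{-\rho}$ only gives $C'_p=O(d_p)$, and the $\tfrac12\log d_p$ shift from the union bound prevents the tail estimate from even fitting the clean form (B1). One must instead keep the tail probability as $\min\{1,\,c\,d_p(R-\tfrac12\log d_p)^{-\rho}\}$ and locate the crossover $R_1\asymp d_p^{1/\rho}$ precisely, the dominant $d_p^{\nu/\rho}$ contribution coming from the region where the probability is merely $\le1$. A secondary technical point is to verify that the shift by $\tfrac12\log d_p$ is of lower order than $R_1$, so that it does not degrade the exponent $\nu/\rho$.
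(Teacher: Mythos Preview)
Your proposal is correct and follows essentially the same approach as the paper: layer-cake formula, the density-convolution argument for the small-ball bound $\sigma_p(\log|W|<-R)\le \pi Md_p e^{-2R}$, the union bound for the tail, and integration with a cutoff at order $d_p^{1/\rho}$. The paper streamlines two points you might adopt: it uses a \emph{single} cutoff $R_0=d_p^{1/\rho}\ge\log d_p$ for both the tail and the small-ball pieces, and it disposes of the $\tfrac12\log d_p$ shift by observing that for $R\ge\log d_p$ one has $R-\tfrac12\log d_p\ge R/2$, so the tail bound simplifies directly to $2^\rho c\,d_p\,R^{-\rho}$ without any asymptotic comparison of $\log d_p$ to $R_1$.
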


\begin{proof} Let $u=(u_1,\ldots,u_{d_p})\in\C^{d_p}$ be a unit vector. For $R\geq\log d_p$ we have 
\[\{a\in\Cdp: \log|\langle a,u\rangle|>R\}\subset \bigcup_{j=1}^{d_p}\big\{a_j:|a_j|>e^{R-\frac12\log d_p}\big\},\]
so by (\ref{tailc}),
\begin{equation}\label{ues}
\sigma_p\big(\{a\in\Cdp: \log|\langle a,u\rangle|>R\}\big)\leq 
d_p\,\boldsymbol{P}\big(\{a_j^p\in\mathbb{C}:|a_j^p|>e^{R-\frac12\log d_p}\}\big)\leq 
\frac{2^\rho cd_p}{R^\rho}\,\cdot
\end{equation}

On the other hand, we have $|u_j|\geq d_p^{-1/2}$ for some $j\in \{1,\dots ,d_p\}$. 
We may assume $j=1$ for simplicity and apply the change of variables
$$\alpha_1=\sum_{j=1}^{d_p}a_j^pu_j,\ \alpha_2=a_2^p,\ \dots, \alpha_{d_p}=a_{d_p}^p. $$
Then, using the assumption $\phi\leq M$,  
\begin{equation}\label{les} 
\begin{split}
\sigma_p\big(\{a\in&\Cdp: \log|\langle a,u\rangle|<-R\}\big)\\ & =
\int_{\mathbb{C}^{d_p-1}}\int_{|\alpha_1|<e^{-R}} 
\phi\left(\frac{\alpha_1-\sum_{j=2}^{d_p}\alpha_ju_j}{u_1}\right)\phi(\alpha_2)\dots
\phi(\alpha_{d_p})\,\frac{d\alpha_1\dots d\alpha_{d_p}}{|u_1|^2}\\
&\leq M\pi d_pe^{-2R}.
\end{split}
\end{equation} 
For $R_0\geq\log d_p$ we obtain using (\ref{ues}) and (\ref{les})
\begin{align*}
\int_{\C^{d_p}}|\log|\langle a,u\rangle||^\nu&\,d\sigma_p(a)=
\nu\int_0^\infty R^{\nu-1}\sigma_p\big(\{a\in\Cdp: |\log|\langle a,u\rangle||>R\}\big)\,dR\\
&\leq\nu\int_0^{R_0} R^{\nu-1}dR + \nu\int_{R_0}^{\infty} 
R^{\nu-1}\sigma_p\big(\{a\in\Cdp: |\log|\langle a,u\rangle||>R\}\big)\,dR\\
&\leq R_0^\nu + \nu\int_{R_0}^{\infty} R^{\nu-1}\left(\frac{2^\rho cd_p}{R^\rho}+
M\pi d_pe^{-2R}\right)dR\,.
\end{align*}
Since $R^{\nu-1}e^{-R}\leq((\nu-1)/e)^{\nu-1}$ for $R>0$, and since $R_0\geq\log d_p$, we get 
\begin{align*}
\int_{\C^{d_p}}|\log|\langle a,u\rangle||^\nu\,d\sigma_p(a)&\leq R_0^\nu+
\frac{2^\rho\nu cd_pR_0^{\nu-\rho}}{\rho-\nu}+
M\pi\nu d_p\left(\frac{\nu-1}{e}\right)^{\nu-1}\int_{R_0}^\infty e^{-R}dR\\
&\leq R_0^\nu\left(1+\frac{2^\rho\nu cd_p}{(\rho-\nu)R_0^\rho}\right)+
M\pi\nu \left(\frac{\nu-1}{e}\right)^{\nu-1}.
\end{align*}
Choosing $R_0^\rho=d_p$ this implies that 
\[\int_{\C^{d_p}}|\log|\langle a,u\rangle||^\nu\,d\sigma_p(a)\leq\Gamma d_p^{\nu/\rho},\]
where $\Gamma>0$ is a constant that depends on $M$, $c$, $\rho$ and $\nu$. 
\end{proof}

We remark that  if $X$ is smooth and each $L_p$ is semiample then 
$d_p=O(A_p^N)$ (see \cite[Proposition 4.5]{CMM}) and Lemma \ref{iid} applies.

\subsubsection{Locally moderate measures}\label{moderatecase}
Let $X$ be a complex manifold and $\sigma$ be a positive measure on $X$. 
Following \cite{DNS}, we say that $\sigma$ is locally moderate if for any open set 
$U\subset X$, any compact set $K\subset U$, and any compact family 
$\mathscr{F}$ of psh functions on $U$, there exist constants $c,\alpha>0$ such that
\begin{equation}\label{mode1}
\int_Ke^{-\alpha\psi}d\sigma\leq c\,,\,\; \forall\,\psi\in\mathscr{F}.
\end{equation}  
Note that a locally moderate measure $\sigma$ does not put any mass on pluripolar sets. 
The existence of $c,\alpha$ in (\ref{mode1}) is equivalent to existence of $c',\alpha'>0$ satisfying
\[\sigma(\{z\in K: \psi(z)<-t\})\leq c'e^{-\alpha't}\,,\]
for any $t\geq0$ and $\psi\in\mathscr{F}$. Important examples are provided by 
the Monge-Amp\`ere measures of H\"older continuous psh functions \cite[Theorem 1.1, Corollary 1.2]{DNS}. 

\begin{Lemma}\label{moderate}
If $\sigma_p\,$, $p\geq1$, is a locally moderate probability measure with compact support in 
${\mathbb C}^{d_p}\simeq H^0_{(2)}(X,L_p)$, then $\sigma_p$ satisfies condition $(B)$ for every $\nu\geq1$. 
\end{Lemma}

\begin{proof}
Consider the compact family of psh functions 
$\mathscr{F}=\{\psi_u: u\in\mathbf{S}^{2d_p-1}\}$, 
where $\psi_u:\Cdp\to[-\infty,\infty)$, $ \psi_u(a)=\log|\langle a,u\rangle|$. Let $R_p\geq1$ be 
such that $\|a\|\leq R_p$ for all $a\in\supp\sigma_p$. Then 
\[|\psi_u(a)|=-\psi_u(a)+\max\{0,2\psi_u(a)\}\leq-\psi_u(a)+2\log R_p\]
holds for all $a\in\supp\sigma_p$ and $\psi_u\in\mathcal{F}$.
Since $\sigma_p$ is locally moderate and with compact support, 
there exist constants $c_p,\alpha_p>0$ such that \eqref{mode1} 
holds for every $\psi_u\in\mathcal{F}$ and with the integral over ${\mathbb C}^{d_p}$. Fix $\nu\geq1$. 
As $x^\nu\leq c'e^{\alpha_p x}$ for all $x\geq0$, 
with some constant $c'>0$ depending on $p,\nu$, we conclude that
\[\int_{\C^{d_p}}|\psi_u(a)|^\nu\,d\sigma_p(a)\leq c'\int_{{\mathbb C}^{d_p}}e^{\alpha_p|\psi_u(a)|}\,d\sigma_p(a)\leq
c'R_p^{2\alpha_p}\int_{{\mathbb C}^{d_p}}e^{-\alpha_p\psi_u(a)}\,d\sigma_p(a)\leq c'c_pR_p^{2\alpha_p}\,.\]
\end{proof}


\end{document}